\newcommand{\elliptic}{\vartheta}
\newcommand{\euler}{\mathrm{e}}
\newcommand{\drm}{\mathrm{d}}
\newcommand{\RR}{\mathbb{R}}
\newcommand{\T}{\mathrm{T}}
\newcommand{\CC}{\mathbb{C}}
\newcommand{\NN}{\mathbb{N}}	
\newcommand{\ZZ}{\mathbb{Z}}
\renewcommand{\epsilon}{\varepsilon}
\newcommand{\weightbound}{\Xi}
\newcommand{\A}{\mathcal{A}}
\newcommand{\diag}{\operatorname{diag}}
\newcommand{\supp}{\operatorname{supp}}
\newcommand{\Tr}{\operatorname{Tr}}
\newcommand{\diver}{\operatorname{div}}
\renewcommand{\Re}{\operatorname{Re}}
\renewcommand{\Im}{\operatorname{Im}}
\newtheorem{theorem}{Theorem}[section]
\newtheorem{lemma}[theorem]{Lemma}
\newtheorem{proposition}[theorem]{Proposition}
\theoremstyle{definition}
\newtheorem*{assumption}{Assumption}
\theoremstyle{remark}
\newtheorem{remark}[theorem]{Remark}
	\definecolor{darkred}{rgb}{0.5,0,0}
	\definecolor{darkgreen}{rgb}{0,0.5,0}
	\definecolor{darkblue}{rgb}{0,0,0.5}	\hypersetup{colorlinks,linkcolor=darkblue,filecolor=darkgreen,urlcolor=darkred,citecolor=darkblue}
\begin{document}
%
%
%
%
\title{A quantitative Carleman estimate for second order elliptic operators}
\author{Ivica Naki\'c}
\affil{University of Zagreb, Department of Mathematics, Croatia}
\author{Christian Rose}
\affil{Max Planck Institute for Mathematics in the Sciences, Leipzig,
Germany}

\author{Martin Tautenhahn}
\affil{Technische Universit\"at Chemnitz, Fakult\"at f\"ur Mathematik, Germany}
%
\date{\vspace{-5ex}}
\maketitle
\begin{abstract}
We prove a Carleman estimate for elliptic second order partial differential expressions with Lipschitz continuous coefficients. 
The Carleman estimate is valid for any complex-valued function $u\in W^{2,2}$ with support in a punctured ball of arbitrary radius.
The novelty of this Carleman estimate is that we establish an explicit dependence on the Lipschitz and ellipticity constants, the dimension of the space and the radius of the ball. In particular we provide a uniform and quantitative bound on the weight function for a class of elliptic operators given explicitly in terms of ellipticity and Lipschitz constant.
\end{abstract}
%
%
%
%
%
%
%
\section{Introduction}
Carleman estimates were first introduced by Carleman \cite{Carleman-39} in 1939 in order to establish a unique continuation property for elliptic operators $L$ with non-analytic coefficients, i.e.\ if the solution $u$ of $L u = 0$ in $\Omega \subset \RR^d$
vanishes in a non-empty open set, then $u$ is identically zero. While Carleman's original result applies to the case $d =2$ and $L = -\Delta + V$ with $V \in L^\infty_{\mathrm{loc}} (\RR^2)$, by now there is a wealth of results pertaining to Carleman estimates
and its application to unique continuation, see, e.g., \cite{Hoermander-89}. In particular, notable attention has been paid to the case $V \in L^p_{\mathrm{loc}} (\RR^d)$, since this can be used to show that the Schr\"odinger operator $H = -\Delta + V$ with potential $V \in L^p_{\mathrm{loc}} (\RR^d)$ has no positive eigenvalues, see \cite{JerisonK-85} and the references therein. Further applications of Carleman estimates are, for example, uniqueness properties of solutions of Schr\"odinger equations \cite{KenigPV-02,IonescuK-06,EscauriazaKPV-11}, uniqueness and stability of inverse problems \cite{Klibanov-13,Klibanov-15}, or control theory of partial differential equations \cite{FursikovI-96,ColombiniZ-01,Rousseau-12,RousseauL-12}.

So far, all the mentioned applications of Carleman estimates concern qualitative statements. 
For its proofs, the particular dependence on the parameters is not essential. Certainly, there are plenty of applications which require some knowledge on the dependence on the parameters entering in the Carleman estimate. For this purpose, we cite the following Carleman estimate by Escauriaza and Vessella \cite{EscauriazaV-03}, see also \cite{KenigSU-11}.
Let $L = -\diver(A\nabla)$
be elliptic with Lipschitz continuous coefficients and denote the corresponding constants by $\vartheta_1$ and $\vartheta_2$, respectively. Then there exist $\kappa \in (0,1)$ and $C,\weightbound,\alpha_0 \geq 1$, depending on $\vartheta_1$ and $\vartheta_2$, and a function $w : \RR^d \to \RR_+$ satisfying $\lvert x \rvert / \weightbound \leq w (x) \leq \weightbound \lvert x \rvert$,
such that for all real-valued $u \in C_{\mathrm c}^\infty (B_\kappa \setminus \{0\})$ 
 and all $\alpha \geq \alpha_0$ we have
\begin{equation} \label{eq:EV03}
 \alpha \int w^{1-2\alpha} \lvert \nabla u \rvert^2 + \alpha^3 \int w^{-1-2\alpha}  u^2 \leq C \int w^{2-2\alpha} \bigl( Lu \bigr)^2 .
\end{equation}
Here, $B_\kappa$ denotes the open ball with radius $\kappa$ and center zero.
\par
The exceptional feature of this Carleman estimate is that the second term on the left hand side goes with $\alpha^3$ and the particular powers of the weight function. Thanks to this, Bourgain and Kenig \cite{BourgainK-05} were able to apply (a refined version of) this Carleman estimate to prove that if $\Delta u = V u$ in $\RR^d$, $u (0) = 1$, $\lvert u \rvert \leq C$ and $\lvert V \rvert \leq C$, then for all $x \in \RR^d$ with $\lvert x \rvert > 1$
 \begin{equation} \label{eq:BK05}
  \max_{\lvert y-x \rvert \leq 1} \lvert u (y) \rvert > c \cdot \exp \left( - c' (\log \lvert x \rvert) \lvert x \rvert^{4/3}\right) .
 \end{equation}
Let us note that the quantitative unique continuation principle in Ineq.~\eqref{eq:BK05} was crucial for the answer to a long-standing problem in the theory of random Schr\"odinger operators, namely
 Anderson localization for the continuum Anderson model with Bernoulli-distributed coupling constants. 
 Moreover, if the precise decay rate $4/3$ in Ineq.~\eqref{eq:BK05}, which results from the particular form of Ineq.~\eqref{eq:EV03}, would be replaced by 1.35, one could not conclude localization for the continuum Anderson-Bernoulli model using the same techniques, cf.\ \cite[p.~412]{BourgainK-05}.
 A $L^2$-variant of Ineq.~\eqref{eq:BK05} was also used in \cite{BourgainK-13} to prove bounds on the density of states measure for Schr\"odinger operators in dimension $d \in \{1,2,3\}$, a restriction which stems from the specific parameters in Ineq.~\eqref{eq:BK05}.
 \par
 Another example where the Carleman estimate \eqref{eq:EV03} proved to be useful is  so-called scale-free and quantitative unique continuation. In \cite{Rojas-MolinaV-13}, the authors prove that if $\lvert \Delta u \rvert \leq \lvert V u \rvert$ in $\Lambda_L = (-L/2 , L/2)^d$, then for all $\delta \in (0,1/2)$ one has
 \begin{equation} \label{eq:RMV13}
  \left( \frac{\delta}{C} \right)^{C + C \lVert V \rVert_{\infty}^{2/3}}\lVert u \rVert_{L^2 (\Lambda_L)}^2 \leq \lVert u \rVert_{L^2 (W_\delta (L))}^2 ,
 \end{equation}
where $C = C (d)$ and $W_\delta (L)$ is some union of equidistributed $\delta$-balls in $\Lambda_L$. The notion scale-free and quantitative refers to the fact that $C$ is independent of $L$ and the dependence on $\delta$ is known to be polynomial. The latter fact holds due to the bounds on the weight function given above and is essential for their application to random Schr\"odinger operators.
\par
The mentioned applications so far concerned Schr\"odinger operators only. In this case, the (abstract) constants $\kappa$ and $\Xi$ in Ineq.~\eqref{eq:EV03} depend only on the dimension. Due to the method of proof in \cite{BourgainK-05,BourgainK-13,Rojas-MolinaV-13}, a precise knowledge of $\kappa$ and $\Xi$ would change only dimension-dependent constants like $c$, $c'$ and $C$. However, if one, for example, pursues to prove an analogue of Ineq.~\eqref{eq:RMV13} for solutions of $\lvert Lu \rvert \leq \lvert V u \rvert$ with $L$ as above, one encounters dependencies of the constants $\kappa$ and $\Xi$ on the ellipticity and Lipschitz parameters of $L$. If we have no quantitative bounds on these parameters, the use of Carleman estimate \eqref{eq:EV03} might not be feasible, cf.\ Remark~\ref{rem:BTV} for details. Such problems have already been discussed in the recent proceedings \cite{BorisovTV-14} and \cite{BorisovNRTV-15} 
on 
quantitative scale-free unique continuation principles for elliptic operators.
This strongly suggests that a refinement of the Carleman estimate \eqref{eq:EV03} with explicit bounds on $\kappa$ and $\Xi$ is a worthwhile goal. 
\par
In this note we give a precise refinement of the Carleman estimate \eqref{eq:EV03} in the sense that
\begin{enumerate}[(i)]
	\item the estimate is valid on the whole punctured unit ball (i.e.\ $\kappa=1$), and
	\item all the constants, including the bound $\weightbound$ on the weight function, are explicitly calculated in terms of ellipticity and Lipschitz constant.
\end{enumerate}
Let us stress that our result has already been used to prove an analogous result to Ineq.~\eqref{eq:RMV13} for second order elliptic operators, see the recent paper \cite{BorisovTV} and Remark~\ref{rem:BTV}. In fact, the Carleman estimate from \cite{EscauriazaV-03} given in Ineq.~\eqref{eq:EV03} would not be sufficient to obtain such a result.
\par
Our proof is based on techniques developed in \cite{EscauriazaV-03,BourgainK-05}. 
Compared to the Carleman estimates of \cite{EscauriazaV-03} and \cite{BourgainK-05}, our result holds with $\kappa = 1$ and additional gradient term in the lower bound at the same time. Let us emphasize that, beside the quantitative control on all the parameters, even this involves non-trivial modifications of the existing proofs, cf.\ Section~\ref{sec:proof}.
\par
The paper is organized as follows. In Section~\ref{sec:main_result} we state the main result of the paper. In Section~\ref{sec:quantitative} we provide preparatory estimates and identities needed for the proof of the main result. In Section~\ref{sec:proof} we give the proof of the main result, while the proofs of two technical lemmata are postponed to the appendix.
%
%
%
%
%
%
\section{Main result}\label{sec:main_result}
Let $d \in \NN$ and $L$ be the second order partial differential expression
\begin{equation} \label{eq:operator}
 Lu := - \diver (A \nabla u) + b^\T \nabla u + c u = -\sum_{i,j=1}^d \partial_i \left( a^{ij} \partial_j u \right) + \sum_{i,j=1}^d b_i \partial_i u + c u ,
\end{equation}
acting on complex-valued functions on $\mathbb{R}^d$, where  $A : \mathbb{R}^d \to \mathbb{R}^{d \times d}$ with $A = (a^{ij})_{i,j=1}^d$, $b : \RR^d \to \CC^d$, $c : \RR^d \to \CC$, and $\partial_i$ denotes the $i$-th weak derivative.
Moreover, we denote by $B_\rho \subset \mathbb{R}^d$ the open ball in $\mathbb{R}^d$ with radius $\rho>0$ and center zero, by $\lvert z \rvert$ the Euclidean norm of $z \in \CC^d$, and by $\lVert M \rVert_\infty$, $\lVert M \rVert_1$ and $\lVert M \rVert$ the row sum, column sum and spectral norm of a matrix $M \in \CC^{d\times d}$. For the coefficient functions $A$, $b$ and $c$
we introduce the following assumption.
\begin{assumption}\label{ass:elliptic+}
Let $\rho > 0$, $\elliptic_1 \geq 1$ and $\elliptic_2 \geq 0$. We say that $\A(\rho,\elliptic_1 , \elliptic_2)$ is satisfied if and only if $ b, c \in L^\infty (B_\rho)$, $a^{ij} = a^{ji}$ for all $i,j \in \{1,\ldots , d\}$ and for all $x,y \in B_\rho$ and all $\xi \in \RR^d$ we have
\begin{equation} \label{eq:elliptic}
\elliptic_1^{-1} \lvert \xi \rvert^2 \leq \xi^\T A (x) \xi \leq \elliptic_1 \lvert \xi \rvert^2
\quad\text{and}\quad 
\lVert A(x) - A(y) \rVert_\infty \leq \elliptic_2 \lvert x-y \rvert .
\end{equation}
\end{assumption}
By Rademacher's theorem \cite{Federer-69}, if $\A (\rho , \elliptic_1 , \elliptic_2)$ is satisfied, then the coefficients $a^{ij}$ are differentiable almost everywhere on $B_\rho$ and the absolute value of the derivative is bounded by $\elliptic_2$. 
Operators $L$ for which the lower bound of the first inequality in \eqref{eq:elliptic} is satisfied are called elliptic. Elliptic means therefore, that for almost every point $x\in B_\rho$ the symmetric $d\times d$ matrix $A (x)$ is positive definite. A simple example is $A=I$ and $b=c=0$. In this case the operator $L$ coincides with the negative Laplacian. We use the notation $A_0 = A (0)$. 
\par
For $\mu , \rho > 0$ we introduce the function $w_{\rho , \mu} : \RR^d \to [0,\infty)$ by
\[
 w_{\rho , \mu} (x) := \varphi (\sigma (x / \rho)) ,
\]
where $\sigma:\mathbb{R}^d \to [0,\infty)$ and $\varphi : [0,\infty) \to [0,\infty)$ are given by
\begin{align*}
\sigma(x):= \left(x^\T A_0^{-1} x \right)^{1/2} \quad \text{and} \quad
\varphi(r):= r \exp \left( - \int_0^r\frac{1 - \euler^{-\mu t}}{t} \drm t \right).
\end{align*}
The function $\varphi$ obeys the upper bound $\varphi (r) \le r$. 
For a lower bound we distinguish two cases. If $\sqrt{\elliptic_1} \mu \leq 1$ we use the lower bound $\varphi (r) \geq r \euler^{-\mu r}$. If $\sqrt{\elliptic_1} \mu \geq 1$ distinguish two subcases. If $r \leq 1/\mu$ we use $1-\euler^{\mu t} \leq \mu t$ and obtain $\varphi (r) \geq r \euler^{-{\mu r}} \geq r (\euler \sqrt{\elliptic_1} \mu)^{-1}$. If $r > 1/\mu$ we split the integral according to the two components $[0,1/\mu]$ and $[1/\mu , r]$. On the first component we use $1-\euler^{-\mu t} \leq \mu t$, on the second one $1-\euler^{-\mu t} \leq 1$. This way we obtain $\varphi (r) \geq 1/(\euler \mu)$ if $r > 1/\mu$. Putting everything together we obtain for all $r \in [0,\sqrt{\elliptic_1}]$
\[
 \varphi (r) \geq \frac{r}{\mu_1} ,
 \quad \text{where} \quad 
 \mu_1 = \begin{cases}
          \euler^{\sqrt{\elliptic_1} \mu}& \text{if $\sqrt{\elliptic_1} \mu \leq 1$,}\\
          \euler \sqrt{\elliptic_1} \mu& \text{if $\sqrt{\elliptic_1} \mu \geq 1$}.
         \end{cases} 
\]
For the weight function $w_{\rho , \mu}$ there follows
\[
 \forall x \in B_\rho \colon \quad 
 \frac{\elliptic_1^{-1/2} \lvert x \rvert}{\rho \mu_1} \leq \frac{\sigma (x)}{\rho \mu_1}  \leq
 w_{\rho , \mu}(x) 
 \leq 
 \frac{\sigma (x)}{\rho} 
 \leq
 \frac{\sqrt{\elliptic_1} \lvert x \rvert}{\rho} .
\]
Keep in mind that we will drop the index of the weight function in Section~\ref{sec:quantitative} and onwards, and write $w$ instead of $w_{\rho , \mu}$.
Our main result is the following Carleman estimate.
\begin{theorem} \label{thm:carleman}
Let $\rho > 0$, $\elliptic_1 \geq 1$, $\elliptic_2 \geq 0$, Assumption $\A (\rho , \elliptic_1 , \elliptic_2)$ be satisfied and
\[
 \mu > 33 d \elliptic_1^{11/2} \elliptic_2 \rho .
\]
Then there exist constants 
$\alpha_0 = \alpha_0 (d , \rho, \elliptic_1 , \elliptic_2, \mu,\Vert b \Vert_\infty,\Vert c\Vert_\infty)$, and $C = C (d , \elliptic_1, \rho \elliptic_2, \mu) > 0$,
such that for all $\alpha \geq \alpha_0$ and all $u\in W^{2,2}(\RR^d)$ with support in $B_\rho \setminus \left\{ 0 \right\}$ we have
\begin{equation}
\label{eq:main_result}
 \int_{\mathbb{R}^d} \left( \alpha \rho^2 w_{\rho , \mu}^{1-2\alpha} 
 \nabla u^\T A \overline{\nabla u} 
 + 
 \alpha^3 w_{\rho , \mu}^{-1-2\alpha} \lvert u \rvert^2  \right) \drm x \leq 
 C \rho^4 \int_{\mathbb{R}^d} w_{\rho , \mu}^{2-2\alpha} \lvert Lu \rvert^2 \drm x .
\end{equation}
The constants $\alpha_0$ and $C$ are given in Eq.~\eqref{eq:alpha+C}.
\end{theorem}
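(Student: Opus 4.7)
The plan is to adapt the positive-commutator strategy of Escauriaza--Vessella \cite{EscauriazaV-03} and Bourgain--Kenig \cite{BourgainK-05}, but to carry every constant through explicitly in terms of $d,\elliptic_1,\elliptic_2,\rho,\mu$ and to retain the gradient contribution on the left-hand side. As a first simplification I would rescale $x=\rho y$, which replaces $\A(\rho,\elliptic_1,\elliptic_2)$ by $\A(1,\elliptic_1,\rho\elliptic_2)$ and sends $w_{\rho,\mu}$ to $w_{1,\mu}$, and then split $L = L_0 + b^\T\nabla + c$ with $L_0 u := -\diver(A\nabla u)$, absorbing the zeroth- and first-order contributions into the right-hand side by taking $\alpha_0$ large enough (this uses the two-sided bound on $w$ to trade $w^{2-2\alpha}$ against $w^{\pm1-2\alpha}$, and is where $\lVert b\rVert_\infty$ and $\lVert c\rVert_\infty$ enter $\alpha_0$). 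It therefore suffices to prove the estimate for $L_0$ on $B_1\setminus\{0\}$.

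The heart of the argument is the Carleman substitution $u=\varphi^\alpha v$ with $\varphi=\varphi(\sigma(x))$, which recasts the inequality as a lower bound for $\lVert\varphi^{1-\alpha}L_0(\varphi^\alpha v)\rVert^2$ in a suitable weighted $L^2$. Writing
\[
 \varphi^{1-\alpha}L_0(\varphi^\alpha v) = S_\alpha v + B_\alpha v + R_\alpha v,
\]
with $S_\alpha$ (resp.\ $B_\alpha$) the formally symmetric (resp.\ antisymmetric) part computed as if $A$ were frozen at $A_0$, and $R_\alpha$ collecting all deviations, one obtains the identity-plus-error
\[
 \lVert S_\alpha v + B_\alpha v\rVert^2 = \lVert S_\alpha v\rVert^2 + \lVert B_\alpha v\rVert^2 + \langle [S_\alpha,B_\alpha]v,v\rangle .
\]
The particular form of $\varphi$ is chosen precisely so that $\varphi'(r)/\varphi(r)=\euler^{-\mu r}/r$; this identity is what forces $\langle[S_\alpha,B_\alpha]v,v\rangle$ to dominate pointwise a sum of three positive contributions with constants $c_j(d,\elliptic_1)>0$, namely
\[
 c_1\alpha^3\,\varphi^{-1-2\alpha}\lvert v\rvert^2 + c_2\alpha\mu\,\euler^{-\mu\sigma}\varphi^{-1-2\alpha}\lvert v\rvert^2 + c_3\alpha\,\varphi^{1-2\alpha}\nabla v^\T A_0\overline{\nabla v}.
\]
Retaining the gradient term (which could otherwise be discarded, as in \cite{EscauriazaV-03,BourgainK-05}) is what produces the first summand on the left of \eqref{eq:main_result}; the intermediate $\alpha\mu$-term plays no role in the final statement but is indispensable for the absorption step below.

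The main obstacle is to show that all remainder contributions, coming from $R_\alpha$ and from the cross-products of $R_\alpha$ with $S_\alpha,B_\alpha$, can be absorbed into the three positive commutator terms. Each such error is bounded pointwise by $\rho\elliptic_2$ (times explicit powers of $\elliptic_1$) applied to one of the three integrals above, and the quantitative assumption $\mu > 33 d\,\elliptic_1^{11/2}\elliptic_2\rho$ is calibrated precisely so that after Cauchy--Schwarz every such error is dominated by, say, one half of its matching commutator contribution. The bookkeeping is cleanest in the linear change of coordinates that maps $A_0$ to the identity: each such change costs a factor $\elliptic_1^{1/2}$, and it is the accumulation of these factors through every integration by parts and Cauchy--Schwarz estimate that produces the power $\elliptic_1^{11/2}$ in the threshold. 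Once the absorptions are justified, undoing $u=\varphi^\alpha v$ and reinstating $\rho$ converts the surviving positive contributions into the two summands on the left of \eqref{eq:main_result} with factors $\rho^2$ and $\rho^4$, and $\alpha_0,C$ of \eqref{eq:alpha+C} are then read off directly from the resulting absorption constants together with the extra threshold needed for the $b,c$-correction.
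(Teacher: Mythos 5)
Your overall strategy (conjugate by the weight, exploit positivity of the commutator between the symmetric and antisymmetric parts, then absorb Lipschitz errors using the threshold condition on $\mu$, and finally restore $\rho$, $b$, $c$ and complex $u$) is the same skeleton as the paper's proof. However, there is a genuine gap in the specific decomposition you propose. You define $S_\alpha$ and $B_\alpha$ as the symmetric and antisymmetric parts of the conjugated operator \emph{computed with $A$ frozen at $A_0$}, so that $R_\alpha$ ``collects all deviations.'' That remainder then contains the second--order term $\varphi^{1-\alpha}\diver\bigl((A_0-A)\nabla(\varphi^{\alpha}v)\bigr)$, i.e.\ terms of the form $(a^{ij}-a^{ij}_0)\,\partial_i\partial_j v$ with coefficients of size $\elliptic_2\lvert x\rvert$. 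The positive commutator only produces lower bounds on weighted integrals of $\lvert v\rvert^2$ and $\lvert\nabla v\rvert^2$, so neither $\lVert R_\alpha v\rVert^2$ nor the cross terms $\langle R_\alpha v, S_\alpha v\rangle$, $\langle R_\alpha v, B_\alpha v\rangle$ can be absorbed into your three positive contributions: they involve second derivatives of $v$, which are controlled only through $\lVert S_\alpha v\rVert^2$ via weighted elliptic regularity, and even then only with a prefactor proportional to $\elliptic_2\rho$, which is \emph{not} assumed small --- the hypothesis $\mu>33d\,\elliptic_1^{11/2}\elliptic_2\rho$ makes $\mu$ large relative to $\elliptic_2\rho$, not $\elliptic_2\rho$ small. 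So the absorption step as you describe it does not close.

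The paper circumvents exactly this issue by keeping the full variable coefficient $A$ in both the symmetric and antisymmetric parts of the decomposition (this is the content of Lemma~\ref{lemma_step1}, proved via Rellich's identity), so that the deviation from the constant--coefficient model enters only through the first--order quantities $M_\sigma^A-M_\sigma^{A_0}$ and $F_\sigma^A-F_\sigma^{A_0}$ of Lemma~\ref{lemma:basic_properties} and Proposition~\ref{prop:quantitative}; these involve at most one derivative of the coefficients and no second derivatives of the conjugated function $f=w^{-\alpha}u$, and are therefore genuinely absorbable under the stated condition on $\mu$. To repair your argument you would need to replace the ``freeze at $A_0$'' splitting by the variable--coefficient one (or equivalently establish and use the identities for $M_w^A$ and $F_w^A$). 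Two further, more minor omissions: you do not address the passage from $C_{\mathrm c}^\infty(B_1\setminus\{0\})$ to $W^{2,2}$ with support in the punctured ball (mollification), and you assert rather than verify that the gradient term survives the absorption --- in the paper this requires the additional manipulation of adding Ineq.~\eqref{eq:intermediate} with its gradient term dropped to Ineq.~\eqref{eq:intermediate2}, which is one of the nontrivial deviations from \cite{EscauriazaV-03,BourgainK-05}.
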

\begin{remark}
In the case where $b$ and $c$ are identically zero, the conclusion of Theorem~\ref{thm:carleman} holds with $C = \tilde C$ and $\alpha_0 = \tilde \alpha_0$ where $\tilde C$ and $\tilde \alpha_0$ are given in Eq.~\eqref{eq:tildeC+alpha}. A straightforward calculation shows that they obey the upper bounds
\begin{align*}
 \tilde C 
 & \leq 2 d^2 \elliptic_1^8 \euler^{4\mu\sqrt{\elliptic_1}}   \mu_1^4 \left( 3 \mu^2 + (9\rho \elliptic_2 + 3)\mu + 1 \right) C_\mu^{-1} \\
 \intertext{and}
 \tilde \alpha_0& \leq  11 d^4 \elliptic_1^{33/2} \euler^{6\mu\sqrt{\elliptic_1}} \mu_1^6 (3\rho\elliptic_2 + \mu + 1)^2 \left(1  + \mu (\mu + 1) C_\mu^{-1}   \right)
\end{align*}
where $C_\mu = \mu - 33 d \elliptic_1^{11/2} \elliptic_2 \rho$. In the general case where $b,c \in L^\infty (B_\rho)$ the conclusion of the theorem holds with
\[
 C = 6 \tilde C 
 \quad \text{and} \quad
 \alpha_0 =\max \left\{ \tilde\alpha_0, C \rho^2  \lVert  b  \rVert_{\infty}^2 \elliptic_1^{3/2},
C^{1/3} \rho^{4/3} \lVert c \rVert_{\infty}^{2/3}\sqrt{\elliptic_1}   \right\} ,
\]
cf.\ Eq.~\eqref{eq:alpha+C}. We observe that if $\elliptic_2 \not = 0$, then both upper bounds of $C$ and $\alpha_0$ tend to infinity if $\mu$ tends to $33 d \elliptic_1^{11/2} \elliptic_2 \rho$. If $\elliptic_2 = 0$  the upper bound of $\tilde \alpha_0$ is uniformly bounded for all $\mu > 0$. 
In the case of the pure negative Laplacian, i.e.\ $A \equiv \operatorname{Id}$ and $b,c \equiv 0$, we have $\elliptic_1 = 1$ and $\elliptic_2 = 0$. In this case we note that our Carleman estimate is valid for arbitrary $\mu > 0$. For example, in the case $\mu = 1$ we infer from the proof of Theorem~\ref{thm:carleman} (not using the above estimates) that 
$C \leq 8 \euler^8  d^2$ and $\alpha_0 \leq 18 \euler^{12} d^4$.
\end{remark}
\begin{remark}\label{rem:BTV}
As mentioned in the introduction, the usefulness of our result has already been shown in the recent paper \cite{BorisovTV}. More precisely, based on the Carleman estimate from Theorem~\ref{thm:carleman}, they show so-called \emph{quantitative unique continuation principles, sampling theorems} as well as \emph{equidistribution theorems} for solutions of certain variable coefficient elliptic partial differential equations or inequalities.  In order to highlight the usefulness of our result, we formulate exemplary a special case of the sampling theorem from \cite{BorisovTV}. Thereafter, we discuss why, e.g., the Carleman estimate from Ineq.~\eqref{eq:EV03} is not satisfactory to obtain such a result.
Beforehand, we introduce some notation. 
\par
Let $\delta > 0$. We say that a sequence $z_j \in \RR^d$, $j \in \ZZ^d$, is \emph{$\delta$-equidistributed}, if
 \[
  \forall j \in \ZZ^d \colon \quad  B(z_j , \delta) \subset (-1/2 , 1/2)^d + j .
\]
Corresponding to a $\delta$-equidistributed sequence $z_j \in \RR^d$, $j \in  \ZZ^d$, we define the set
\[
S_{\delta} = \bigcup_{j \in \ZZ^d } B(z_j , \delta) \subset \RR^d .
\]
Note that the set $S_\delta$ depends on the choice of the $\delta$-equidistributed sequence.
\begin{theorem}[\cite{BorisovTV}]\label{thm:BTV}
Assume
\begin{equation} \label{ass:samplingG=1}
 \epsilon_1 := 1 - 33 \euler d (\sqrt{d} + 2) \elliptic_1^{6} \elliptic_2   > 0 .
\end{equation}
Then for all measurable and bounded $V : \RR^d \to \RR$, all $\psi \in W^{2,2} (\RR^d)$ and $\zeta \in L^2 (\RR^d)$ satisfying $\lvert L \psi \rvert \leqslant \lvert V\psi \rvert + \lvert \zeta \rvert$ almost everywhere on $\RR^d$, all $\delta \in (0,1/2)$ and all $\delta$-equi\-distri\-buted sequences we have
\[
 \lVert \psi \rVert_{S_\delta}^2 + \delta^2 \lVert \zeta \rVert_{\RR^d}^2 \geqslant c_{\mathrm{sfUC}} \lVert \psi \rVert_{\RR^d}^2 ,
\]
where
\begin{equation} \label{eq:definition-csfuc} 
 c_{\mathrm{sfUC}} = d_1 \left( \frac{\delta}{d_2} \right)^{\frac{d_3}{\epsilon_1} \bigl( 1 +  \lVert V \rVert_\infty^{2/3} + \lVert b \rVert_\infty^{2} + \lVert c \rVert_\infty^{2/3} \bigr) -\ln \epsilon_1}
\end{equation}
and $d_1$, $d_2$, and $d_3$ are constants depending only on $\elliptic_1$, $\elliptic_2$, and the dimension $d$. 
\end{theorem}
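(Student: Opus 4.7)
The plan is to follow the Rojas-Molina/Veseli\'c scheme (cf.\ \eqref{eq:RMV13}): apply Theorem~\ref{thm:carleman} on a ball $B_\rho(z_j)$ around each sampling point $z_j$ to derive a cube-by-cube propagation estimate
\[
 \lVert\psi\rVert_{\Lambda_j}^2 \leq K^{-1}\bigl(\lVert\psi\rVert_{B(z_j,\delta)}^2 + \delta^2\lVert\zeta\rVert_{B_\rho(z_j)}^2\bigr),
\]
where $\Lambda_j:=(-1/2,1/2)^d+j$, and then sum over $j\in\ZZ^d$. The quantitative bounds on $\weightbound$ and on $\alpha_0,C$ supplied by Theorem~\ref{thm:carleman} are precisely what allow $K$ to be tracked as the explicit polynomial-in-$\delta$ factor in \eqref{eq:definition-csfuc}; without such explicit control (as in the variant \eqref{eq:EV03}) the exponent would depend on $\elliptic_1,\elliptic_2$ in an unknown way.

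Concretely, I would fix $j\in\ZZ^d$, translate so that $z_j$ becomes the origin, and take $\rho=\sqrt d+2$, whence $B_\rho\supset\Lambda_j$. The hypothesis \eqref{ass:samplingG=1} is tailored precisely so that one can choose $\mu=\euler\elliptic_1^{3/2}+1$ with separation $C_\mu=\mu-33d\elliptic_1^{11/2}\elliptic_2\rho$ of order $\epsilon_1$, making the constants $\mu_1$, $\alpha_0$ and $C$ of Theorem~\ref{thm:carleman} depend only on $d,\elliptic_1,\elliptic_2$ (times a factor $\epsilon_1^{-1}$). Next pick a cutoff $\chi=\chi_{\mathrm{in}}\chi_{\mathrm{out}}$ with $\chi_{\mathrm{in}}\in C^\infty(\RR^d\setminus\{0\})$ vanishing on $B(0,\delta/2)$ and equal to one outside $B(0,\delta)$ (with $\lvert\partial^k\chi_{\mathrm{in}}\rvert\lesssim\delta^{-k}$), and $\chi_{\mathrm{out}}\in C_{\mathrm c}^\infty(B_\rho)$ equal to one on a slightly smaller ball $B_{\rho'}\supset\Lambda_j$. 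Apply Theorem~\ref{thm:carleman} to $u:=\chi\psi$, expand $Lu=\chi L\psi+[L,\chi]\psi$, and invoke $\lvert L\psi\rvert\leq\lvert V\psi\rvert+\lvert\zeta\rvert$: the contribution $\lVert V\rVert_\infty^2\int w^{2-2\alpha}\lvert u\rvert^2$ is absorbed into the $\alpha^3$-term on the left as soon as $\alpha^3\gtrsim\rho^4\lVert V\rVert_\infty^2$, producing the $\lVert V\rVert_\infty^{2/3}$ factor in the exponent of \eqref{eq:definition-csfuc}; the $b$- and $c$-terms coming from $L$ applied to $\chi\psi$ are absorbed analogously into the gradient and cubic terms on the left, yielding the $G^2\lVert b\rVert_\infty^2$ and $\lVert c\rVert_\infty^{2/3}$ contributions.

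After these absorptions, the right hand side consists of the source $\rho^4\int w^{2-2\alpha}\lvert\chi\zeta\rvert^2$ plus two commutator integrals supported on the inner annulus $A_{\mathrm{in}}:=B(0,\delta)\setminus B(0,\delta/2)$ (where $w\sim\delta/(\rho\mu_1)$) and the outer annulus $A_{\mathrm{out}}:=B_\rho\setminus B_{\rho'}$ (where $w$ is of order one). Using the two-sided pointwise bounds on $w_{\rho,\mu}$ recorded before Theorem~\ref{thm:carleman}, the inner commutator is bounded by $(C_1\rho\mu_1/\delta)^{2\alpha}\,\mathrm{poly}(\alpha,\delta^{-1})\,\lVert\psi\rVert_{B(z_j,\delta)}^2$, while the outer one carries a weight factor strictly smaller than the one bounding the left hand side from below on $\Lambda_j\setminus B(0,\delta)$ (provided $\rho'<\rho$ has a fixed margin) and is thus absorbable into the final summed estimate. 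Choosing $\alpha$ as the maximum of $\alpha_0$ from Theorem~\ref{thm:carleman} and a constant multiple of $\epsilon_1^{-1}\bigl(1+\lVert V\rVert_\infty^{2/3}+G^2\lVert b\rVert_\infty^2+\lVert c\rVert_\infty^{2/3}\bigr)$, rearranging, and summing over $j\in\ZZ^d$ using the finite multiplicity of the enlarged cover $\{B_\rho(z_j)\}_j$ yields the claim with exponent as in \eqref{eq:definition-csfuc}. The main obstacle, and the reason the Escauriaza--Vessella estimate \eqref{eq:EV03} does not directly suffice, is the absorption of the outer commutator: one needs the ratio of the weight between $\Lambda_j$ and $A_{\mathrm{out}}$ to be bounded below by a constant that is explicit in $\elliptic_1,\elliptic_2$, which is exactly the quantitative control of $\weightbound$ (via $\mu_1$) furnished by Theorem~\ref{thm:carleman}.
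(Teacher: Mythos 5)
First, a caveat about the comparison itself: Theorem~\ref{thm:BTV} is not proved in this paper at all --- it is quoted verbatim from \cite{BorisovTV} as an illustration of the usefulness of Theorem~\ref{thm:carleman}, and the paper refers the reader there for the constants $d_1,d_2,d_3$. So your proposal can only be judged on its own merits. Your overall strategy (cut off $\psi$ near $z_j$ and near $\partial B_\rho(z_j)$, apply the Carleman estimate, absorb the potential and lower-order terms by taking $\alpha$ large, absorb the outer commutator by a weight comparison, sum over $j$) is indeed the Rojas-Molina--Veseli\'c/Bourgain--Kenig scheme that \cite{BorisovTV} follows, and you correctly identify the outer-annulus absorption as the step that forces quantitative control of $\weightbound$.

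However, your parameter choice breaks exactly at that step. You set $\rho=\sqrt d+2$ and $\mu=\euler\elliptic_1^{3/2}+1$. The absorption of the outer commutator requires $\min_{A_{\mathrm{out}}}w>\max w$ over the region where the left-hand side must dominate (which contains $\Lambda_j$, hence points at distance up to $\sqrt d$ from $z_j$). By the two-sided bounds recorded before Theorem~\ref{thm:carleman}, $w(x)\geq\elliptic_1^{-1/2}\lvert x\rvert/(\rho\mu_1)$ and $w(x)\leq\sqrt{\elliptic_1}\lvert x\rvert/\rho$, so this forces $\rho'>\elliptic_1\mu_1\sqrt d$. With your $\mu$ one has $\sqrt{\elliptic_1}\mu\geq1$, hence $\mu_1=\euler\sqrt{\elliptic_1}\mu=\euler\sqrt{\elliptic_1}(\euler\elliptic_1^{3/2}+1)$, and the requirement becomes $\rho'>\euler\elliptic_1^{3/2}(\euler\elliptic_1^{3/2}+1)\sqrt d$, which is incompatible with $\rho'<\rho=\sqrt d+2$ even for $\elliptic_1=1$. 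The roles of the two parameters must be reversed: take $\mu$ of order one and $\rho=(\sqrt d+2)(\euler\elliptic_1^{3/2}+1)$, i.e.\ enlarge the ball to compensate the weight distortion $\elliptic_1\mu_1=\euler\elliptic_1^{3/2}$. This is precisely how Assumption~\eqref{ass:samplingG=1} arises: with that $\rho$ and $\mu=1$ the hypothesis of Theorem~\ref{thm:carleman} reads $1>33d\elliptic_1^{11/2}\elliptic_2\rho$, and $C_\mu=1-33d\elliptic_1^{11/2}\elliptic_2\rho=\epsilon_1$, which is what produces the $\epsilon_1^{-1}$ and $-\ln\epsilon_1$ in the exponent~\eqref{eq:definition-csfuc}. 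With your assignment $C_\mu\geq\euler\elliptic_1^{3/2}$ is bounded away from zero, so the claimed $\epsilon_1$-dependence would not emerge either.

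A secondary omission: the commutator $[L,\chi]\psi$ on the inner annulus contains $\nabla\chi^{\T}A\nabla\psi$ and $\diver(A\psi\nabla\chi)$, so its $L^2$ norm is controlled by $\delta^{-1}\lVert\nabla\psi\rVert+\delta^{-2}\lVert\psi\rVert$ on that annulus, not by $\lVert\psi\rVert_{B(z_j,\delta)}$ alone. To reach the conclusion, which involves only $L^2$ norms over the $\delta$-balls, you need an interior Caccioppoli inequality (with constants again explicit in $\elliptic_1,\elliptic_2,\lVert b\rVert_\infty,\lVert c\rVert_\infty$) to trade the gradient on the annulus for $\psi$ on a slightly larger ball, and the cutoff radii must be nested accordingly. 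The same applies to the gradient part of the outer commutator. These are standard but indispensable ingredients that your sketch does not supply.
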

The constants $d_1$, $d_2$ and $d_3$ are given explicitly in \cite{BorisovTV}. 
Note that the exponent in \eqref{eq:definition-csfuc} can be interpreted 
as an estimate on the vanishing order as studied in \cite{DonnellyF-88,Kukavica-98}. The obtained estimate of vanishing order and its dependence on ellipticity and Lipschitz parameters originates from the particular form of our Carleman estimate, including $C$, $\alpha_0$, $w$, and the cubic behaviour of $\alpha$.
\par
Certainly, Theorem~\ref{thm:BTV} can be proven by using the Carleman estimate \eqref{eq:EV03} of \cite{EscauriazaV-03}. However, then Assumption \eqref{ass:samplingG=1} would involve some function of $\kappa$ and $\Xi$. Since $\kappa$ and $\Xi$ itself are abstract functions of $\elliptic_1$ and $\elliptic_2$, it would be not clear if the corresponding Assumption~\eqref{ass:samplingG=1} is ever satisfied. Moreover, in \cite{EscauriazaV-03} neither a monotonicity nor a continuity property of $\kappa$ and $\Xi$ is established. Thus even for operators with variable coefficients which are arbitrary close to the Laplacian, it is not possible to use solely the Carleman estimate of \cite{EscauriazaV-03} to ensure an estimate like \eqref{ass:samplingG=1}. For this reason, it is even not possible to verify the corresponding Assumption \eqref{ass:samplingG=1} by choosing $\elliptic_1$ and $\elliptic_2$ 
sufficiently small. 
\end{remark}
\begin{remark}
  Our result gives rise to a uniform Carleman estimate for a certain class of elliptic operators given precisely in terms of ellipticity and Lipschitz constants. More precisely, fix $\hat\elliptic_1\geq1$ and $\hat\elliptic_2 , \hat b , \hat c > 0$, $\rho > 0$ and denote by $\mathcal L$ the class of elliptic operators of the form \eqref{eq:operator} for which
\begin{enumerate}[(i)]
 \item the ellipticity and Lipschitz constant on $B_\rho$ are bounded by $\hat \elliptic_1$ and $\hat \elliptic_2$, and
 \item the coefficient functions $b$ and $c$ satisfy $\lVert  b  \rVert_\infty \leq \hat b$ and $\lVert c \rVert_\infty \leq \hat c$.
\end{enumerate}
Then, by monotonicity we can apply our result uniformly for all $L \in \mathcal L$ with, e.g., $\mu = 34 d \hat\elliptic_1^{11/2} \hat\elliptic_2 \rho$. In particular, there are constants $\tilde\alpha_0$ and $\tilde C$, depending only on $d$, $\hat \elliptic_1$, $\hat \elliptic_2$, $\hat b$, $\hat c$ and $\rho$, and a function $w = w_{\rho , \mu}:\RR^d \to [0,\infty)$, 
such that \eqref{eq:main_result} holds 
for all elliptic operators in $L \in \mathcal L$.
 \end{remark}
%
%
%
%
%
%
%
%
%
%
%
\section{Preliminary relations and quantitative estimates} \label{sec:quantitative}
In this section we provide some preparatory estimates and identities for the proof of Theorem~\ref{thm:carleman}. In particular, we will prove the quantitative estimates needed for our quantitative version of a Carleman estimate. First we introduce some notation. Let $L_0$ be the part of $L$ containing only the second order term, i.e.
\[
 L_0 u := - \diver (A \nabla u) .
\]
For $k \in \{1,\ldots , d\}$ let $e_k \in \mathbb{R}^d$ be the $k$-th unit vector. For $g \in \{\sigma , w\}$ and real-valued $f \in C_{\mathrm{c}}^\infty(\mathbb{R}^d)$ we define the functions $F_g^A: \mathbb{R}^d \to \mathbb{R}$, $D_f : \mathbb{R}^d \to \mathbb{R}$ and $h_g^A : \mathbb{R}^d \to \mathbb{R}^d$ given by
\[
 F_g^A:= -\frac{gL_0 g}{\nabla g^\T A \nabla g} - 1 ,
 \quad 
 D_f := w \frac{\nabla f^\T A \nabla w}{\nabla w^\T A\nabla w} + \frac{1}{2} f F_w^A ,
 \quad \text{and} \quad
 h_g^A := \frac{g A \nabla g}{\nabla g^\T A \nabla g} .
\]
Note that $F_g^{A_0}$ refers to the function $F_g^{A_0} = -g\diver(A_0 \nabla g) /(\nabla g^\T A_0 \nabla g) - 1$. Moreover, we introduce the matrix-valued function $M_g^A :\mathbb{R}^d \to \mathbb{R}^{d \times d}$,
\[
 M_g^A: = -\frac{1}{2} F_g^A A + \frac{1}{2} \diver (h_g^A \circ A) - \frac{1}{2} \left( A D(h_g^A) + \left[A D(h_g^A)\right]^\T \right) .
\]
Here $h_g^A \circ A$ denotes entrywise multiplication, i.e.\ $h_g^A \circ A: = (h_g^A a^{ij})_{i,j=1}^d$, while the divergence of a matrix denotes entrywise divergence. The matrix $D(h_g^A)$ is defined as the $d \times d$ matrix $(\nabla (h_g^A)_1 \ \nabla (h_g^A)_2  \ldots \nabla (h_g^A)_d)$, i.e.\ $D(h_g^A) := (\partial_i e_j^\T h_g^A)_{i,j=1}^d$. Finally, we define $\psi : (0,\infty) \to (0,\infty)$,  $\psi(r) := \varphi(r) /( r\varphi'(r))$. Note that
$\psi (r) = \mathrm{e}^{\mu r}$.

The following lemma provides some basic properties of the functions introduced above.
\begin{lemma} \label{lemma:basic_properties}
Let $\rho = 1$, $\elliptic_1 \geq 1$, $\elliptic_2 \geq 0$ and $\A (1 , \elliptic_1 , \elliptic_2)$ be satisfied. Then, almost everywhere on $B_1$, we have for the matrix-valued functions $M_w^A$ and $M_\sigma^A$ the relations
\[
 M_w^A = \psi (\sigma) M_\sigma^A + \sigma \psi' (\sigma) \left[ A - A \frac{\nabla \sigma \nabla\sigma^\T}{\nabla\sigma^T A \nabla \sigma} A \right] , 
 \quad
 M_\sigma^{A_0} = 0
 \quad \text{and} \quad
 M_\sigma^A \nabla \sigma = 0,
\]
and for the functions $F_w^A$, $F_\sigma^A$ and $F_\sigma^{A_0}$ the relations
\[
 F_w^A = \psi (\sigma) F_\sigma^A - \sigma \psi' (\sigma)
 \quad \text{and} \quad F_\sigma^{A_0} = d-2 .
\]
\end{lemma}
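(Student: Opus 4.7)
My plan is to verify the five identities in order of increasing difficulty, leaning throughout on the two structural facts $w = \varphi(\sigma)$ and $h_\sigma^A\cdot\nabla\sigma = \sigma$ (the latter being immediate from the definition $h_\sigma^A = \sigma A\nabla\sigma / (\nabla\sigma^\T A\nabla\sigma)$).

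For the scalar identities I would first compute $F_\sigma^{A_0}$ directly: from $\sigma^2 = x^\T A_0^{-1} x$ one reads off $\nabla\sigma = A_0^{-1}x/\sigma$, so $\nabla\sigma^\T A_0\nabla\sigma = 1$ and $\diver(A_0\nabla\sigma) = \diver(x/\sigma) = (d-1)/\sigma$, whence $\sigma L_0\sigma = -(d-1)$ and $F_\sigma^{A_0} = d-2$. For $F_w^A = \psi(\sigma) F_\sigma^A - \sigma\psi'(\sigma)$, the chain rule gives $\nabla w = \varphi'(\sigma)\nabla\sigma$ and $L_0 w = \varphi'(\sigma) L_0\sigma - \varphi''(\sigma) \nabla\sigma^\T A\nabla\sigma$. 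Substituting into the definition of $F_w^A$ reduces the identity to the purely algebraic relation $\varphi\varphi''/(\varphi')^2 + \psi - 1 = -r\psi'$, which in turn follows by logarithmic differentiation from $\psi(r) = \varphi(r)/(r\varphi'(r))$.

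For the matrix identities the pivotal observation is $h_w^A = \psi(\sigma) h_\sigma^A$, obtained by substituting $\nabla w = \varphi'(\sigma)\nabla\sigma$ into the definition of $h_w^A$ and simplifying. Setting $\alpha = \nabla\sigma^\T A\nabla\sigma$, the product rule combined with $h_\sigma^A\cdot\nabla\sigma = \sigma$ yields $\diver(h_w^A\circ A) = \psi(\sigma)\diver(h_\sigma^A\circ A) + \sigma\psi'(\sigma) A$, while the chain rule gives $D(h_w^A) = \psi(\sigma) D(h_\sigma^A) + \psi'(\sigma)\nabla\sigma(h_\sigma^A)^\T$. Left-multiplying by $A$ and using $A\nabla\sigma(h_\sigma^A)^\T = (\sigma/\alpha) A\nabla\sigma\nabla\sigma^\T A$ (which is symmetric), the extra contributions, together with $-\tfrac12(F_w^A - \psi(\sigma) F_\sigma^A)A = \tfrac12\sigma\psi'(\sigma) A$ coming from the $F_w^A$ relation just proved, assemble into exactly $\sigma\psi'(\sigma)[A - A\nabla\sigma\nabla\sigma^\T A/\alpha]$. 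The identity $M_\sigma^{A_0} = 0$ then follows by direct evaluation: at $A = A_0$ one has $h_\sigma^{A_0} = x$, $D(h_\sigma^{A_0}) = I$, and $\diver(h_\sigma^{A_0}\circ A_0) = dA_0$, so $M_\sigma^{A_0} = -\tfrac12(d-2)A_0 + \tfrac12 dA_0 - A_0 = 0$.

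The main obstacle is the final identity $M_\sigma^A\nabla\sigma = 0$, which requires careful bookkeeping. Writing $h = h_\sigma^A$ and differentiating $h\cdot\nabla\sigma = \sigma$ yields $D(h)\nabla\sigma = \nabla\sigma - (h\cdot\nabla)\nabla\sigma$. An index computation further gives
\begin{align*}
 \diver(h\circ A)\nabla\sigma &= \diver(h)\,A\nabla\sigma + (h\cdot\nabla)(A\nabla\sigma) - A(h\cdot\nabla)\nabla\sigma, \\
 (AD(h))^\T\nabla\sigma &= A\nabla\sigma + (h\cdot\nabla)(A\nabla\sigma) - \sigma^{-1} h\,(h\cdot\nabla)\alpha ,
\end{align*}
where the second identity uses $A\nabla\sigma = \alpha h/\sigma$ to convert $(A\nabla\sigma\cdot\nabla)$ into $(\alpha/\sigma)(h\cdot\nabla)$. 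Substituting these together with $F_\sigma^A A\nabla\sigma = -L_0\sigma\, h - (\alpha/\sigma) h$ into the four-term expression for $M_\sigma^A\nabla\sigma$, all transport-type terms of the form $(h\cdot\nabla)(A\nabla\sigma)$ and $A(h\cdot\nabla)\nabla\sigma$ cancel pairwise, leaving $(h/2\sigma)\bigl[\sigma L_0\sigma - \alpha + \alpha\diver(h) + (h\cdot\nabla)\alpha\bigr]$. Since $\alpha\diver(h) + (h\cdot\nabla)\alpha = \diver(\alpha h) = \diver(\sigma A\nabla\sigma) = \alpha - \sigma L_0\sigma$, the bracket vanishes. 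The delicate step is recognising that the surviving terms reassemble into a divergence that cancels via the definition of $L_0$; the rest is calculus.
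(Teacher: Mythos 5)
Your proposal is correct, and all the identities you invoke check out. For four of the five relations your route coincides with the paper's: $F_\sigma^{A_0}=d-2$ and $F_w^A=\psi(\sigma)F_\sigma^A-\sigma\psi'(\sigma)$ by the chain rule plus the logarithmic-derivative identity $r\psi'=1-\psi-\varphi\varphi''/(\varphi')^2$ (the paper's Eq.~\eqref{psi'} in disguise), and the first matrix relation via the pivotal factorization $h_w^A=\psi(\sigma)h_\sigma^A$ together with $h_\sigma^A\cdot\nabla\sigma=\sigma$, exactly as in Appendix~\ref{app:lemma_2.1}. Where you genuinely diverge is the identity $M_\sigma^A\nabla\sigma=0$: the paper reduces it to the explicit $A_0$-based decomposition $M_\sigma^A=S_5+S_3+S_3^\T=\sum_{i=1}^6 T_i$ extracted from the proof of Proposition~\ref{prop:quantitative} and verifies $T_i\nabla\sigma=0$ term by term through index computations with the matrix $C$, whereas you work coordinate-free directly from the four-term definition of $M_\sigma^A$, differentiating $h\cdot\nabla\sigma=\sigma$ and using $A\nabla\sigma=(\alpha/\sigma)h$ to produce the pairwise cancellation of the transport terms, with the residue killed by $\diver(\alpha h)=\diver(\sigma A\nabla\sigma)=\alpha-\sigma L_0\sigma$. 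I verified this cancellation scheme; it is correct and self-contained. Your argument is arguably cleaner and does not rely on the machinery of Proposition~\ref{prop:quantitative}, at the price of requiring careful a.e.\ differentiation of $h_\sigma^A$ (harmless, since $A$ is Lipschitz and $\sigma$ is smooth away from the origin); the paper's version buys a decomposition that is reused anyway for the quantitative bounds of Proposition~\ref{prop:quantitative}.
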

The relations of Lemma~\ref{lemma:basic_properties} can be found in \cite{EscauriazaV-03}, see also \cite{MorassiRV-11}. Note that our matrix-valued function $M_g^A$ coincides with the matrix-valued function $S_g^A$ of \cite{MorassiRV-11}, $g \in \{w,\sigma\}$. Indeed, the matrix $S_g^A$ is defined by
\begin{equation*}
2 (S_g^A)_{ij}
:= 
- F_g^A a^{ij} \delta_{ij}
+ \diver (h_g^A) a^{ij} \delta_{ij} 
- \partial_k (h_g^A)_j a^{ki} 
- \partial_k (h_g^A)_i a^{kj} 
+  (h_g^A)_k \partial_k a^{ij} ,
\end{equation*}
which coincides with $2 (M_g^A)_{ij}$. However, since no details of the proof are given in \cite{EscauriazaV-03,MorassiRV-11} we give a proof of Lemma~\ref{lemma:basic_properties} in Appendix~\ref{app:lemma_2.1}.
\par
In the following proposition we prove the quantitative estimates needed for our quantitative version of a Carleman estimate. 
\begin{proposition} \label{prop:quantitative}
Let $\rho = 1$, $\elliptic_1 \geq 1$, $\elliptic_2 \geq 0$ and $\A (1 , \elliptic_1 , \elliptic_2)$ be satisfied. Then, for all $\xi \in \mathbb{R}^d$ and almost everywhere on $B_1$ we have the estimates
 \[
  \lvert F_\sigma^A - F_\sigma^{A_0} \rvert \leq \sigma C'_F ,
  \quad
  \lvert \xi^\T M_\sigma^A \xi \rvert \leq \sigma C_M \xi^\T A \xi ,
  \quad 
  \lvert F_w^A \rvert \leq C_F ,
  \quad \text{and} \quad
  \lvert L_0 \psi (\sigma) \rvert \leq C_\psi / \sigma ,
 \]
where
 \begin{align*}
C'_F   & :=3 d \elliptic_1^{7/2} \elliptic_2,&
C_F    & :=\euler^{\mu \sqrt{\elliptic_1}} \left( \sqrt{\elliptic_1} (C'_F + \mu) + \lvert d-2 \rvert  \right),  \\
C_M    & :=11  d \elliptic_1^{11/2}  \elliptic_2,  &
C_\psi & := \mu \euler^{\mu \sqrt{\elliptic_1}} \elliptic_1^2
 \left( \sqrt{\elliptic_1} (C'_F + \mu) + d-1  \right) .
\end{align*}
\end{proposition}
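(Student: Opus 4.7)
My plan is to use Lemma~\ref{lemma:basic_properties} to reduce the four estimates to a comparison between the given coefficient matrix $A$ and its constant replacement $A_0$. Indeed, that lemma identifies the reference values $F_\sigma^{A_0} = d-2$ and $M_\sigma^{A_0} = 0$, so each of (1)--(4) amounts to a quantitative bound on the discrepancy caused by replacing $A_0$ with $A$. The only quantitative inputs for this comparison are the Lipschitz bound $\lVert A(x) - A_0\rVert_\infty \leq \elliptic_2 |x|$ (hence also $\lVert A(x) - A_0 \rVert \leq \elliptic_2|x|$ by symmetry) and the pointwise bound $|\partial_k a^{ij}| \leq \elliptic_2$ a.e. Combined with the explicit identities $\sigma\nabla\sigma = A_0^{-1}x$ and $\nabla\sigma^\T A_0\nabla\sigma \equiv 1$ and with ellipticity, these yield two quantitative facts used throughout: first, $|\nabla\sigma|^2 \in [\elliptic_1^{-1},\elliptic_1]$ and hence the sharp lower bound $\nabla\sigma^\T A\nabla\sigma \geq \elliptic_1^{-2}$; second, $|x| \leq \sqrt{\elliptic_1}\,\sigma$ on $B_1$, which converts each Lipschitz factor $|x|$ into a factor $\sigma$ times a power of $\elliptic_1$.

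\textbf{Estimates (1), (3), (4).} For (1), I would apply the product rule to $-\sigma L_0\sigma = \sigma\diver(A\nabla\sigma)$ using the explicit second derivative $\partial_k\sigma_j = (A_0^{-1})_{jk}/\sigma - \sigma_j\sigma_k/\sigma$ to get
\[
 -\sigma L_0\sigma = \sigma\sum_{i,j}(\partial_i a^{ij})\sigma_j + \Tr(AA_0^{-1}) - \nabla\sigma^\T A\nabla\sigma .
\]
Dividing by $\nabla\sigma^\T A\nabla\sigma$, subtracting $F_\sigma^{A_0}+1 = d-1$, and rewriting $\Tr(AA_0^{-1}) = d + \Tr((A-A_0)A_0^{-1})$ and $\nabla\sigma^\T A\nabla\sigma = 1 + \nabla\sigma^\T(A-A_0)\nabla\sigma$ isolates a numerator whose three summands are each linear in either $\partial A$ or $A-A_0$ and each bounded by $d\elliptic_1^{3/2}\elliptic_2\sigma$; division by $\elliptic_1^{-2}$ then gives $|F_\sigma^A - F_\sigma^{A_0}| \leq 3d\elliptic_1^{7/2}\elliptic_2\sigma = C'_F\sigma$. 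Estimate (3) is immediate from Lemma~\ref{lemma:basic_properties}, since $F_w^A = \psi(\sigma)F_\sigma^A - \sigma\psi'(\sigma)$ with $\psi(\sigma) = \euler^{\mu\sigma} \leq \euler^{\mu\sqrt{\elliptic_1}}$, $\sigma \leq \sqrt{\elliptic_1}$, and the bound from (1) produce $C_F$. For (4), chain- and product-rule yield
\[
 L_0\psi(\sigma) = -\mu\psi(\sigma)\nabla\sigma^\T A\nabla\sigma\bigl[(F_\sigma^A+1)/\sigma + \mu\bigr] ,
\]
after which absorbing $\nabla\sigma^\T A\nabla\sigma \leq \elliptic_1^2$, $\psi(\sigma) \leq \euler^{\mu\sqrt{\elliptic_1}}$ and the bound on $|F_\sigma^A|$ gives exactly $C_\psi/\sigma$.

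\textbf{Main obstacle: estimate (2).} Expanding the quadratic form gives
\[
 \xi^\T M_\sigma^A\xi = -\tfrac12 F_\sigma^A\, \xi^\T A\xi + \tfrac12 \diver(h_\sigma^A)\, \xi^\T A\xi + \tfrac12\xi^\T(h_\sigma^A\cdot\nabla A)\xi - \xi^\T A D(h_\sigma^A)\xi ,
\]
so since $M_\sigma^{A_0} \equiv 0$, each of the four pieces must vanish when $A = A_0$, and the task is to quantify these cancellations. Using $\sigma\nabla\sigma = A_0^{-1}x$ I would rewrite $h_\sigma^A = AA_0^{-1}x/\alpha$ with $\alpha := \nabla\sigma^\T A\nabla\sigma$, from which $h_\sigma^{A_0} = x$, $D(h_\sigma^{A_0}) = I$ and $\diver(h_\sigma^{A_0}) = d$ are immediate. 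The first piece is then handled via (1) together with $|\xi|^2 \leq \elliptic_1\xi^\T A\xi$ and $\lVert A-A_0\rVert \leq \elliptic_2 |x|$. For the divergence piece I would use the identity
\[
 \diver(h_\sigma^A) = F_\sigma^A + 2 - (\nabla\alpha\cdot h_\sigma^A)/\alpha ,
\]
obtained from the product rule together with $\sigma\diver(A\nabla\sigma) = (F_\sigma^A+1)\alpha$, which reduces the correction $\diver(h_\sigma^A) - d$ to the already-controlled $F_\sigma^A - (d-2)$ plus a scalar bounded via $\partial_k\alpha = \sum(\partial_k a^{ij})\sigma_i\sigma_j + 2(A\nabla\sigma)\cdot\partial_k\nabla\sigma$. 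For the transport piece, a crude pointwise bound $|h_\sigma^A| \leq \elliptic_1^{9/2}\sigma$ together with $|\partial_k a^{ij}| \leq \elliptic_2$ suffices. For the fourth piece, the quotient rule applied to $AA_0^{-1}x/\alpha$ produces derivatives of two kinds: those that land on $A$ (supplying a factor $\elliptic_2$) and those that are linear in $A-A_0$ (supplying a factor $\elliptic_2|x| \leq \elliptic_2\sqrt{\elliptic_1}\sigma$). Summing the four contributions and using $|\xi|^2 \leq \elliptic_1\xi^\T A\xi$ yields $11d\elliptic_1^{11/2}\elliptic_2\sigma\, \xi^\T A\xi = C_M\sigma\xi^\T A\xi$. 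The delicate point, and the main bookkeeping obstacle, is to keep the worst-case power $\elliptic_1^{11/2}$ tight: this forces systematic use of $\nabla\sigma^\T A\nabla\sigma \geq \elliptic_1^{-2}$ rather than a cruder lower bound, and careful conversion between $\lVert\cdot\rVert$, $\lVert\cdot\rVert_\infty$ and the Frobenius norm at each step.
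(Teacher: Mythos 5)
Your treatment of estimates (1), (3) and (4) is essentially the paper's own argument: the same reduction via Lemma~\ref{lemma:basic_properties}, the same quantitative inputs ($\lVert A-A_0\rVert\leq\elliptic_2\lvert x\rvert$, $\lvert x\rvert\leq\sqrt{\elliptic_1}\sigma$, $\nabla\sigma^\T A\nabla\sigma\geq\elliptic_1^{-2}$), and your identities for $-\sigma L_0\sigma$ and $L_0\psi(\sigma)$ agree with Eq.~\eqref{eq:BsigmaA} and the displayed computation at the end of the paper's proof. These three parts are fine.

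For estimate (2), however, there is a genuine gap. First, a small but real error: it is not true that "each of the four pieces must vanish when $A=A_0$" — only their sum does (e.g.\ the first piece equals $-\tfrac{d-2}{2}\xi^\T A_0\xi$ and the second $\tfrac{d}{2}\xi^\T A_0\xi$); your subsequent plan of comparing each piece with its reference value $d-2$, $d$, $0$, $I$ is the right correction, but the claim as stated is false. More importantly, the proposition's entire content is the explicit constant $C_M=11d\elliptic_1^{11/2}\elliptic_2$, and you assert rather than verify that your four contributions sum to it. Two features of your organization put that constant at risk. (i) You bound $\lvert F_\sigma^A-(d-2)\rvert$ separately inside the first piece and again inside the divergence piece; the paper instead arranges an exact cancellation of these two contributions (its $S_1+S_6=0$ after applying the product rule to $\diver(h_\sigma^A\circ A)$), so your route pays an extra $C_F'\sigma=3d\elliptic_1^{7/2}\elliptic_2\sigma$ that the paper does not. (ii) The crude product bound $\lvert h_\sigma^A\cdot\nabla\alpha\rvert/\alpha\leq\lvert h_\sigma^A\rvert\,\lvert\nabla\alpha\rvert/\alpha$ with $\lvert h_\sigma^A\rvert\leq\elliptic_1^{7/2}\sigma$ (your stated $\elliptic_1^{9/2}$ is even cruder) and $\alpha^{-1}\leq\elliptic_1^2$ gives at best order $\elliptic_1^{11/2}$ and, without the sharper $\lvert A\nabla\sigma\rvert\leq\sqrt{\elliptic_1\alpha}$, order $\elliptic_1^{15/2}$; the paper avoids this by computing the scalar $\nabla g^\T AA_0^{-1}x$ directly (its $\tfrac12 S_8+S_9$), obtaining order $\elliptic_1^{9/2}$. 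Since the remaining piece $-\xi^\T A(D(h_\sigma^A)-I)\xi$ already costs $8d\elliptic_1^{11/2}\elliptic_2\sigma\,\xi^\T A\xi$ in the paper's tight bookkeeping, your additional losses in (i) and (ii) plausibly push the total above $11d\elliptic_1^{11/2}$. To close the argument you must either carry out the arithmetic and show your decomposition still lands at or below $11$, or restore the cancellation and the sharper scalar bound as in the paper's proof.
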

\begin{proof}
Recall that the coefficients $a^{ij}$ are almost everywhere on $B_1$ differentiable.
We start by estimating $\lvert F_\sigma^A - F_\sigma^{A_0} \rvert$. By Lemma~\ref{lemma:basic_properties} and the definition of $F_\sigma^A$ we have
\[
 F_\sigma^A - F_\sigma^{A_0} = -\frac{\sigma L_0 \sigma}{\nabla\sigma^\T A \nabla\sigma} - 1 - (d-2) .
\]
We calculate 
\begin{equation*}
 \nabla\sigma^\T A \nabla\sigma = \sigma^{-2} x^\T A_0^{-1} A A_0^{-1} x
 \quad \text{and} \quad
 L_0 \sigma = \sigma^{-3} x^\T A_0^{-1} A A_0^{-1} x - \sigma^{-1} \diver \left( A A_0^{-1} x \right)
\end{equation*}
and obtain
\begin{equation} \label{eq:BsigmaA}
 F_\sigma^A - F_\sigma^{A_0}
 = \frac{\sigma^2 \diver \left( A A_0^{-1} x \right)}{x^\T A_0^{-1} A A_0^{-1} x}  - d  
 = \frac{\sigma^2  \sum_{i,l} \left(\partial_i a^{il} \right) e_l^\T A_0^{-1} x  + \sigma^2 \Tr \left( A A_0^{-1} \right) }{x^\T A_0^{-1} A A_0^{-1} x}  - d .
\end{equation}
By our assumption on ellipticity and Lipschitz continuity we have $\lvert e_l^\T A_0^{-1} x \rvert \leq \sqrt{\elliptic_1} \sigma$, $\sum_{i,l=1}^d \lvert \partial_i a^{il} \rvert \leq d \elliptic_2$ and $x^\T A_0^{-1} A A_0^{-1} x \geq \elliptic_1^{-2} \sigma^2$. Hence,
\[
 \lvert F_\sigma^A - F_\sigma^{A_0} \rvert
 \leq  \sigma d \elliptic_1^{5/2}  \elliptic_2 + \biggl\lvert \frac{\sigma^2 \Tr \left( A A_0^{-1} \right)}{x^\T A_0^{-1} A A_0^{-1} x}  - d \biggr\rvert = \sigma d \elliptic_1^{5/2}  \elliptic_2  + \lvert \Tr (T) \rvert,
\]
where
\[
 T := g A A_0^{-1} - \operatorname{Id} \quad \text{and} \quad  g := \frac{\sigma^2}{x^\T A_0^{-1} A A_0^{-1} x}.
\]
Next we estimate $\lvert \Tr (T) \rvert \leq d\lVert T \rVert$. We have
\begin{align*}
 \lVert T \rVert &= \lVert g A A_0^{-1} - g A_0 A_0^{-1} + g A_0 A_0^{-1} - \operatorname{Id} \rVert 
 \leq \lvert g \rvert \lVert A A_0^{-1} - A_0 A_0^{-1} \rVert + \lvert g - 1  \rvert .
\end{align*}
By 

\medskip
By our ellipticity assumption we have $g = \sigma^2 / ( x^\T A_0^{-1} A \allowbreak A_0^{-1} x ) \leq \elliptic_1^2$ and
\begin{align*}
\lvert g-1  \rvert
= \left\lvert \frac{x^\T A_0^{-1}(A_0 -A) A_0^{-1}x}{x^\T A_0^{-1} A A_0^{-1} x} \right\rvert 
&\le \lVert A-A_0 \rVert \frac{x^\T A_0^{-1} A_0^{-1}x}{x^\T A_0^{-1} A A_0^{-1} x}  \le \elliptic_1 \lVert A-A_0 \rVert .
\end{align*}
For the norm $\lVert A_0 - A \rVert$ we use our assumption on Lipschitz continuity and obtain
\begin{equation} \label{eq:Lipschitz}
  \lVert A_0 - A \rVert \leq \sqrt{\lVert A-A_0 \rVert_1 \lVert A - A_0 \rVert_\infty} \leq \elliptic_2 \lvert x \rvert \leq \elliptic_2 \sqrt{\elliptic_1} \sigma.
\end{equation}
By the same argument we have $\lVert A A_0^{-1} - A_0 A_0^{-1} \rVert \leq \elliptic_1^{3/2}  \elliptic_2 \sigma$. Hence we have $\lVert T \rVert \leq 2\elliptic_1^{7/2} \elliptic_2 \sigma$ and we obtain the claimed bound $\lvert F_\sigma^A - F_\sigma^{A_0} \rvert \leq \sigma C'_F$. 
\par
Now we turn to the estimate $\lvert \xi^\T M_\sigma^A \xi \rvert \leq \sigma C_M \xi^\T A \xi$. From Lemma~\ref{lemma:basic_properties} we infer that $M_\sigma^{A_0} A_0^{-1} A = 0$. Note also that $h_\sigma^{A_0}  = x$ and $D (h_\sigma^{A_0}) = I$. Hence
  \begin{equation*} \label{eq:M}
  M_\sigma^A = M_\sigma^A - M_\sigma^{A_0} A_0^{-1} A  = S_1 + S_2 + S_3 + S_3^\T
  \end{equation*}
  where
  \begin{align*}
  S_1 := -\frac 12 ( F_\sigma^A  -  F_\sigma^{A_0} )A,\!\!\quad
  S_2 :=\frac{1}{2}\left( \diver (h_\sigma^{A} \circ A) -  d A \right)  
  \quad\!\! \text{and} \!\!\quad
  S_3 := - \frac{1}{2} A \left( D(h_\sigma^A) -  I \right)  .
  \end{align*}
For the second summand $S_2$ we have by the product rule for the divergence and Eq.~\eqref{eq:BsigmaA} 
\begin{equation*}
 S_2 = \frac{1}{2} \left( \left[\nabla \left( \frac{\sigma^2 a^{ij}}{x^\T A_0^{-1} A A_0^{-1} x} \right)^\T A A_0^{-1} x\right]_{i,j=1}^d + A (F_\sigma^A - F_\sigma^{A_0}) \right) =: S_5 + S_6 .
\end{equation*}
Hence, $S_1$ and $S_6$ cancel out and we have $M_\sigma^A = S_5 + S_3 + S_3^\T$. For $S_5$ we calculate
\[
S_5 = \frac{1}{2} S_7 + \frac{1}{2} S_8  + S_9 ,
\]
where
\begin{align*}
S_7 &:= \frac{\sigma^2}{x^\T A_0^{-1} A A_0^{-1} x} \left[ (\nabla a^{ij})^\T A A_0^{-1} x \right]_{i,j=1}^d, \\
S_8 &:= - \frac{\sigma^2 A}{(x^\T A_0^{-1} A A_0^{-1} x)^2} \sum_k  x^\T A_0^{-1} (\partial_k A) A_0^{-1} x  e_k^\T A A_0^{-1} x, \\
S_9 &:= A   -  \frac{\sigma^2 A}{(x^\T A_0^{-1} A A_0^{-1} x)^2}  
x^\T A_0^{-1} A A_0^{-1} A A_0^{-1} x .
\end{align*}
Here $\partial_k A$ denotes the matrix $(\partial_k a^{ij})_{i,j=1}^d$. For the Frobenius norm of $S_7$ we have using $g \leq \elliptic_1^2$, $\sum_{j} \lvert \partial_k a^{ij}\rvert^2 \leq \elliptic_2^2$ and $\lvert A A_0^{-1} x \rvert \leq \elliptic_1^{3/2} \sigma$
\begin{equation*}
\lVert S_7 \rVert_{\mathrm F} 
= g \sqrt{ \sum_{i,j=1}^d \left\lvert  (\nabla a^{ij})^\T A A_0^{-1} x  \right\rvert^2 } \leq g \sqrt{\sum_{i,j,k=1}^d \lvert \partial_k a^{ij} \rvert^2 \lvert (A A_0^{-1} x)_k \rvert^2 }
\leq \sqrt{d} \elliptic_1^{7/2} \elliptic_2 \sigma ,
\end{equation*}
and hence we obtain
\[
\lvert \xi^\T S_7  \xi \rvert \leq \lVert S_7 \rVert \xi^\T \xi \leq  \lVert S_7 \rVert_{\mathrm F} \xi^\T  \xi \leq   \lVert S_7 \rVert_{\mathrm F} \elliptic_1 \xi^\T A  \xi 
\leq  \sqrt{d} \elliptic_1^{9/2} \elliptic_2 \sigma \xi^\T A \xi . 
\]
For $S_8$ we use H\"older's inequality and $\lvert A A_0^{-1} x \rvert \leq \sqrt{\elliptic_1} x^\T A_0^{-1} A  A _0^{-1}x$\ and obtain
\begin{align*}
 \lvert \xi^\T S_8 \xi \rvert 
 \leq \frac{\xi^\T A \xi \sigma^2 \sqrt{\elliptic_1}}{(x^\T A_0^{-1} A A_0^{-1} x)^{3/2}} \left(\sum_{k=1}^d \lvert x^\T A_0^{-1} (\partial_k A) A_0^{-1} x \rvert^2 \right)^{1/2}
\end{align*}
Since $\lVert \partial_k A \rVert \leq \sqrt{d}\lVert \partial_k A \rVert_{\infty} \leq \sqrt{d} \elliptic_2$ and $x^\T A_0^{-1} A A_0^{-1} x \geq \elliptic_1^{-1} x^\T A_0^{-1} A_0^{-1} x$ we obtain
\begin{align*}
 \lvert \xi^\T S_8 \xi \rvert 
 \leq \frac{\xi^\T A \xi \sigma^2 \sqrt{\elliptic_1}}{(x^\T A_0^{-1} A A_0^{-1} x)^{3/2}} d\elliptic_2  x^\T A_0^{-1} A_0^{-1} x 
 \leq
 \frac{\xi^\T A \xi \sigma^2  \elliptic^{4/2}}{(x^\T A_0^{-1}  A_0^{-1} x)^{1/2}} d\elliptic_2  
 \leq
d \elliptic_1^{5/2} \elliptic_2 \sigma \xi^\T A \xi .
\end{align*}
For $S_9$ we calculate
\begin{align*}
\lvert \xi^\T S_9 \xi \rvert 
&=  \frac{\xi^\T A \xi}{(x^\T A_0^{-1} A A_0^{-1} x)^2}
\left\lvert (C^\T x)^\T (y-x) - (y-x)^\T Cx \right\rvert 
 \leq  \frac{ \xi^\T A \xi (\lvert C x \rvert + \lvert C^\T x \rvert)  \lvert x-y \rvert }{(x^\T A_0^{-1} A A_0^{-1} x)^2}  ,
\end{align*}
where we used the notations $C = A_0^{-1} A A_0^{-1} x x^\T A_0^{-1}$ and $y = A A_0^{-1} x$.
The inequalities
$\lvert C x \rvert \leq \sigma^2 \elliptic_1^{3/2} (x^\T A_0^{-1} A A_0^{-1} x)^{1/2}$, $\lvert C^\T x \rvert \leq \sigma^2 \elliptic_1^{3/2} (x^\T A_0^{-1} A A_0^{-1} x)^{1/2}$, $\lvert x-y \rvert  \leq \lVert A - A_0 \rVert (x^\T A_0^{-1} A_0^{-1} x)^{1/2}$, and $x^\T A_0^{-1} A A_0^{-1} x \geq \elliptic_1^{-1} x^\T A_0^{-1}  A_0^{-1} x$ imply
\[
\lvert \xi^\T S_9 \xi \rvert 
\leq
\frac{2 \xi^\T A \xi \sigma^2 \elliptic_1^{3/2} \lVert A - A_0 \rVert (x^\T A_0^{-1} A_0^{-1} x)^{1/2} }{(x^\T A_0^{-1} A A_0^{-1} x)^{3/2}}
\leq
\frac{2 \xi^\T A \xi \sigma^2 \elliptic_1^{3} \lVert A - A_0 \rVert }{x^\T A_0^{-1}  A_0^{-1} x} .
\]
Using $x^\T A_0^{-1} A_0^{-1} x \geq \elliptic_1^{-1} \sigma^2$ and Ineq.~\eqref{eq:Lipschitz} we obtain
\[
 \lvert \xi^\T S_9 \xi \rvert  \leq
 2 \xi^\T A \xi \elliptic_1^{4} \lVert A - A_0 \rVert \leq 2 \xi^\T A \xi \elliptic_1^{9/2} \elliptic_2 \sigma .
\]
For $S_5$ we finally obtain the estimate
\begin{equation} \label{eq:S2}
\lvert \xi^\T S_5 \xi \rvert \leq 3 d \elliptic_1^{9/2} \elliptic_2 \sigma \xi^\T A \xi .
\end{equation}
Now we start estimating $S_3$. We have $D (h_\sigma^A) = g D (A A_0^{-1} x)+ \operatorname{diag} (\nabla g) X A_0^{-1} A$, where $X := (x \cdots x)^\T$ and $\diag (v) := \diag (v_1, \ldots , v_d)$ for $v \in \mathbb{R}^d$. For $\partial_k g$ we calculate
\begin{equation*}
\partial_k g
=
\frac{2 e_k^\T A_0^{-1} x}{x^\T A_0^{-1} A A_0^{-1} x} 
-
\frac{x^\T A_0^{-1} x}{(x^\T A_0^{-1} A A_0^{-1} x)^2}
\left(
2 e_k^\T A_0^{-1} A A_0^{-1} x + x^\T A_0^{-1} (\partial_k A) A_0^{-1} x
\right) .
\end{equation*}
Hence,
\[
S_3 = -\frac{1}{2} A \left( S_{10} + S_{11} + S_{12} \right) 
\]
where
\begin{align*}
S_{10} &:= g D (A A_0^{-1} x) - I , \\
S_{11} &:= \frac{2}{x^\T A_0^{-1} A A_0^{-1} x} 
\left[ A_0^{-1} x - g  A_0^{-1} A A_0^{-1} x \right]
x^\T A_0^{-1} A , \\
S_{12} &:= -\frac{g}{x^\T A_0^{-1} A A_0^{-1} x}  (x^\T A_0^{-1} (\partial_i A) A_0^{-1} x)_{i=1}^d x^\T A_0^{-1} A .
\end{align*}
For the norm of $S_{10}$ we calculate using $D (x) = I$
\begin{align*}
\lVert S_{10} \rVert 
& \leq g \lVert D (A A_0^{-1} x-x) \rVert + \lvert g-1 \rvert 
=g \left\lVert A_0^{-1}A  + C - I \right\rVert + \lvert g-1 \rvert \\
& \leq g \lVert A_0^{-1} \rVert \cdot \lVert A - A_0 \rVert + g \left\lVert C \right\rVert
+ \lvert g-1 \rvert ,
\end{align*}
where 
\[
C = \Bigl(\sum_{k=1}^d (\partial_i a^{jk}) e_k^\T A_0^{-1} x \Bigr)_{i,j=1}^d .
\]
The existing bounds $g\leq \elliptic_1^2$, $\lvert g-1 \rvert \leq \elliptic_1^3 \lVert A - A_0 \rVert$, $\lVert A-A_0 \rVert \leq \elliptic_2 \sqrt{\elliptic_1} \sigma$ and
\[
\lVert C \rVert \leq  \lVert C \rVert_{\mathrm F} 
\leq  \sqrt{\sum_{i,j,k=1}^d  \lvert \partial_i a^{jk} \rvert^2 \lvert e_k^\T A_0^{-1} x \rvert^2 } 
\leq  \sqrt{\sum_{i=1}^d \elliptic_2^2 \lvert A_0^{-1} x \rvert^2 }
\leq \sqrt{d \elliptic_1} \elliptic_2 \sigma
\]
give us $\lVert S_{10} \rVert \leq 3 \sqrt{d} \elliptic_1^{7/2}\elliptic_2 \sigma$.
For $S_{11}$ we calculate using Cauchy-Schwarz's  inequality, $\lvert A A_0^{-1} x \rvert \leq \sqrt{\elliptic_1} (x^\T A_0^{-1} A A_0^{-1} x)^{1/2}$, $x^\T A_0^{-1} A A_0^{-1} x \geq \elliptic_1^{-1} x^\T A_0^{-1} A_0^{-1} x$, $T = g A A_0^{-1} - I$, and $\lVert T \rVert \leq 2 \elliptic_1^{7/2} \elliptic_2 \sigma$ as before 
\[
 \lVert S_{11} \rVert \leq \frac{2\sqrt{\elliptic_1}}{(x^\T A_0^{-1} A A_0^{-1} x)^{1/2}} \lvert (I - g A_0^{-1} A) A_0^{-1} x \rvert \leq 2 \elliptic_1 \lVert T \rVert \leq 4 \elliptic_1^{9/2} \elliptic_2 \sigma . 
\]
For $S_{12}$ we use Cauchy Schwarz's inequality and $\lvert A A_0^{-1} x \rvert \leq \sqrt{\elliptic_1} (x^\T A_0^{-1} A A_0^{-1} x)^{1/2}$, and obtain
\begin{equation*}
 \lVert S_{12} \rVert 
  \leq \frac{\sqrt{\elliptic_1} g}{(x^\T A_0^{-1} A A_0^{-1} x)^{1/2}}  \lvert (x^\T A_0^{-1} (\partial_i A) A_0^{-1} x)_{i=1}^d \rvert 
\end{equation*}
Since $\lvert x^\T A_0^{-1} (\partial_i A) A_0^{-1} x \rvert \leq \sqrt{d} \elliptic_2 x^\T A_0^{-1} A_0^{-1} x$ we have $\lvert (x^\T A_0^{-1} (\partial_i A) A_0^{-1} x)_{i=1}^d \rvert \leq d \elliptic_2 x^\T \allowbreak A_0^{-1} \allowbreak A_0^{-1} x$. Hence, using $x^\T A_0^{-1} A A_0^{-1} x \geq \elliptic_1^{-1} x^\T A_0^{-1} A_0^{-1} x$, $g \leq \elliptic_1^2$ and $x^\T A_0^{-1} A_0^{-1} x \leq \elliptic_1 \sigma^2$ we obtain
\[
  \lVert S_{12} \rVert \leq \elliptic_1 g d \elliptic_2 (x^\T A_0^{-1} A_0^{-1} x)^{1/2} 
  \leq d \elliptic_1^{7/2}  \elliptic_2 \sigma .
\]
Note that for any matrix $C$ we have by ellipticity of $A$ the estimate $\xi^\T A C \xi \leq \elliptic_1 \lVert C \rVert \xi^\T A \xi$.


Hence, we obtain
\begin{equation} \label{eq:S3}
\lvert \xi^\T S_3 \xi \rvert
\leq 
\frac{1}{2} \xi^\T A \xi \elliptic_1 \lVert S_{10} + S_{11} + S_{12} \rVert \leq  4 d \elliptic_1^{11/2} \elliptic_2 \sigma \xi^\T A \xi .
\end{equation}
From Ineq.~\eqref{eq:S2} and \eqref{eq:S3} we conclude that $\lvert \xi^\T M_\sigma^A \xi \rvert = \lvert \xi^\T (S_5 + S_3 + S_3^\T) \xi \rvert \leq \sigma C_M \xi^\T A \xi$.
\par
In order to prove the bound $\lvert F_w^A \rvert \leq C_F$ we use the formulas $F_w^A = \psi (\sigma) F_\sigma^A - \sigma \psi' (\sigma)$ and $F_\sigma^{A_0} = d-2$ from Lemma~\ref{lemma:basic_properties}, our previous bound $\lvert F_\sigma^A - F_\sigma^{A_0} \rvert \leq \sigma C'_F$, $\psi' (\sigma) = \mu \psi (\sigma)$, $\psi (\sigma) \leq \euler^{\mu \sqrt{\elliptic_1}}$ and $\sigma \leq \sqrt{\elliptic_1}$ on $B_1$ to obtain
\begin{equation*}
\lvert F_w^A \rvert
\leq \psi (\sigma) \left( \left\lvert F_\sigma^A - F_\sigma^{A_0} \right\rvert + \left\lvert F_\sigma^{A_0} - \sigma \mu \right\rvert \right) 
 \leq C_F. 
\end{equation*}
\par
For our last bound $\lvert L_0 \psi (\sigma) \rvert \leq C_\psi / \sigma$ we note that $\nabla \psi (\sigma) = \mu  \euler^{\mu \sigma} \sigma^{-1} A_0^{-1} x$, use the product rule for the divergence and Eq.~\eqref{eq:BsigmaA} to obtain
\begin{align*}
- L_0\psi(\sigma)&=\diver(A\nabla\psi(\sigma)) =\mu\left(\nabla\left(\frac{\euler^{\mu\sigma}}{\sigma}\right)^\T AA_0^{-1}x+\frac{\euler^{\mu\sigma}}\sigma\diver(AA_0^{-1}x)\right)\\
&=\mu \euler^{\mu\sigma} \left(\frac{\mu}{\sigma^2}x^\T A_0^{-1}AA_0^{-1}x-
\frac{1}{\sigma^3}x^\T A_0^{-1}AA_0^{-1}x
+\frac{1}\sigma\diver(AA_0^{-1}x)\right)  \\
&=\mu \euler^{\mu\sigma} \frac{x^\T A_0^{-1}AA_0^{-1}x}{\sigma^3} \left(\mu \sigma
- 1
+ d + (F_\sigma^A - F_\sigma^{A_0}) \right) .
\end{align*}
The result follows from $x^\T A_0^{-1}AA_0^{-1}x \leq \elliptic_1^2 \sigma^2$, $\sigma \leq \sqrt{\elliptic_1}$ and $\lvert F_\sigma^A - F_\sigma^{A_0} \rvert \leq \sigma C'_F$.
\end{proof}
%
%
%
%
%
%
\section{Proof of Theorem~\ref{thm:carleman}}\label{sec:proof}
As mentioned in the introduction, the proof of Theorem~\ref{thm:carleman} is based on ideas from \cite{EscauriazaV-03,BourgainK-05}. One might think that once the quantitative bounds from Section~\ref{sec:quantitative} are at hand, the result is obtained simply by following the existing proofs \cite{EscauriazaV-03,BourgainK-05}. However, due to the fact that we want to have an explicit dependence on all the parameters and also treat the case $\kappa = 1$ with additional gradient term in the lower bound at the same time (in the sense of Ineq.~\eqref{eq:EV03}), we need to employ non-trivial extensions of the techniques used in the mentioned papers. 
More precisely, the proof of Carleman estimate in \cite{EscauriazaV-03} has the gradient term in the lower bound which is sometimes needed in the applications, but the non-scaled version is valid on $B_\kappa \setminus \{0\}$ with undetermined $\kappa < 1$, and the proof does not allow for a scaling which keeps the constants explicit. The Carleman estimate in \cite{BourgainK-05}, on the other hand, is valid on the whole punctured unit ball in the non-scaled version, but the gradient cannot be easily preserved without breaking the proof. Our extension of their techniques addresses this issue. 
\par
First we prove a special case of the theorem and assume that
\begin{equation}\label{eq:case1}
u \in C_{\mathrm c}^\infty (B_1 \setminus \{0\}), \quad 
\rho = 1, \quad  
b , c \equiv 0, 
\quad \text{and} \quad
 u \ \text{is real-valued.}   
\end{equation}
Note that in this case we have $L = L_0$.
The general case then follows by regularization, scaling, the fact that Carleman estimates are stable under first order perturbations, and by adding the obtained Carleman estimate for $\Re u$ and $\Im u$. These steps are carried out in detail at the end of this section.
\par
We set $f = w^{-\alpha} u$. The following lemma can be found in \cite{EscauriazaV-03,MorassiRV-11}. For completeness we give a proof in Appendix~\ref{app:Lemma4.1}.
\begin{lemma}
\label{lemma_step1}
For all $\alpha > 0$ we have
\begin{multline} \label{eq:lemma1}
I_1 := \int \frac{w^2}{\nabla w^\T A\nabla w}(w^{-\alpha} L_0 u)^2 
\\
\geq 4\alpha\int (\nabla f)^\T M_w^A \nabla f - \alpha\int F_w^A L_0 (f^2)
+4\alpha^2\int\frac{\nabla w^\T A\nabla w}{w^2}D_f^2 .
\end{multline}
\end{lemma}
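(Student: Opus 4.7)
My strategy is the classical conjugation trick for Carleman estimates, combined with the integration-by-parts identities packaged into $M_w^A$ and $D_f$ in Section~\ref{sec:quantitative}. First I substitute $u = w^\alpha f$ and use the product rule for the divergence together with the identity $L_0 w = -(1+F_w^A)(\nabla w^\T A \nabla w)/w$, which is immediate from the definition of $F_w^A$. This yields
\[
w^{-\alpha} L_0 u \;=\; \underbrace{L_0 f}_{=:T_1} \;-\; \underbrace{2\alpha\,w^{-1}(\nabla w)^\T A \nabla f}_{=:T_2} \;-\; \underbrace{\alpha(\alpha + F_w^A)\,w^{-2}(\nabla w^\T A \nabla w)\, f}_{=:T_3}.
\]
Writing $P := \nabla w^\T A \nabla w$, the integrand in $I_1$ becomes $(w^2/P)(T_1+T_2+T_3)^2$. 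Dropping the three pointwise non-negative terms $\int (w^2/P)T_i^2$ reduces the lemma to evaluating the three cross-integrals $2\int (w^2/P)\,T_iT_j$. Conceptually this is a decomposition into symmetric and antisymmetric parts with respect to the weighted measure $(w^2/P)\,\drm x$, and the lemma is equivalent to identifying their commutator.

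Each cross-integral is evaluated by integration by parts, with all boundary terms vanishing because $f \in C^\infty_c(B_1 \setminus \{0\})$. The term $2\int (w^2/P) T_1 T_2$ is the one from which $M_w^A$ emerges: after moving the derivative off $L_0 f = -\diver(A\nabla f)$ and symmetrising the resulting bilinear form in $\nabla f$, the three building blocks $F_w^A A$, $\diver(h_w^A \circ A)$, and $A D(h_w^A) + [AD(h_w^A)]^\T$ appear with exactly the coefficients prescribed by the definition of $M_w^A$. The term $2\int (w^2/P) T_2 T_3$, after using $f\nabla f = \tfrac12 \nabla(f^2)$ and integrating by parts once more, generates a quadratic expression in $f$ and $\nabla f^\T A \nabla w$ that completes to $D_f^2$ weighted by the factor $4\alpha^2 P/w^2$. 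The term $2\int (w^2/P) T_1 T_3$, combined with the $F_w^A$-dependent residual left over from the first cross-integral, collects into the single divergence-form correction $-\alpha\int F_w^A L_0(f^2)$.

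\textbf{The main obstacle} is the algebraic matching: three different integration-by-parts computations each emit terms involving $F_w^A$, $\nabla f^\T A \nabla w$ and $f^2$, and they must be arranged so that all residuals collapse exactly into $D_f^2$ and $F_w^A L_0(f^2)$ with the stated coefficients. Viewed in reverse, the definitions of $M_w^A$ and $D_f$ in Section~\ref{sec:quantitative} are tailor-made to be the cleanest labels for the outputs of this commutator; once this structural observation is in place, the computation — although lengthy — is essentially forced, and \eqref{eq:lemma1} follows by restoring the discarded squared terms on the left-hand side as a non-negative surplus.
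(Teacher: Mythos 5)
Your conjugation identity $w^{-\alpha}L_0u=T_1+T_2+T_3$ is correct (it is Eq.~\eqref{LwrtLaplace} of the paper in disguise), and the tools you intend to use — Green's formula for the $F_w^A L_0(f^2)$ term and Rellich's identity for the emergence of $M_w^A$ from $\int h_w^{A\,\T}\nabla f\,L_0f$ — are the right ones. The fatal step is the bookkeeping: ``drop the three diagonal terms $\int (w^2/P)T_i^2$ and evaluate only the cross-integrals.'' The right-hand side of \eqref{eq:lemma1} contains the \emph{complete square} $4\alpha^2\int\frac{\nabla w^\T A\nabla w}{w^2}D_f^2$, whose expansion includes $4\alpha^2\int(\nabla w^\T A\nabla f)^2/(\nabla w^\T A\nabla w)$ (genuinely quadratic in $\nabla f$, with weight $\alpha^2$) and $\alpha^2\int (F_w^A)^2f^2\,\nabla w^\T A\nabla w/w^2$. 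In your expansion these live entirely inside the discarded diagonal terms $\int(w^2/P)T_2^2$ and $\int(w^2/P)T_3^2$, and the cross terms cannot regenerate them: $T_2T_3$ is only linear in $\nabla f$, so after $f\nabla f=\tfrac12\nabla(f^2)$ and integration by parts it yields pure $f^2$-terms; $T_1T_2$ produces via Rellich a quadratic form in $\nabla f$ only at order $\alpha$, not $\alpha^2$. Moreover $2\int(w^2/P)T_1T_3$ contains the unaccounted piece $-2\alpha^2\int fL_0f=-2\alpha^2\int\nabla f^\T A\nabla f\le 0$. A direct computation shows that $2\sum_{i<j}\int(w^2/P)T_iT_j$ equals the right-hand side of \eqref{eq:lemma1} \emph{minus} a generically positive quantity, so your inequality $I_1\ge 2\sum_{i<j}\int(w^2/P)T_iT_j$ does not imply the lemma.

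The repair is a different grouping into \emph{two} terms rather than three: write $-w^{-\alpha}L_0u=X+Y$ with $X:=-L_0f+\alpha^2fw^{-2}\nabla w^\T A\nabla w$ and $Y:=2\alpha w^{-2}(\nabla w^\T A\nabla w)D_f$, i.e.\ fold $T_2$ together with the $F_w^A$-part of $T_3$ into $Y$ (this is exactly what $D_f$ packages), and the $\alpha^2$-part of $T_3$ together with $T_1$ into $X$. Then $\int(w^2/P)Y^2$ is precisely $4\alpha^2\int\frac{\nabla w^\T A\nabla w}{w^2}D_f^2$ and must be \emph{kept}; only the single square $\int(w^2/P)X^2\ge0$ is dropped; and the cross term $2\int(w^2/P)XY=-4\alpha\int D_fL_0f+4\alpha^3\int\frac{\nabla w^\T A\nabla w}{w^2}D_f\,f$ is handled by showing the second integral vanishes (Green's formula) and converting the first, via Green's formula and Rellich's identity, into $4\alpha\int\nabla f^\T M_w^A\nabla f-\alpha\int F_w^AL_0(f^2)$. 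This is the paper's argument; without this regrouping your plan cannot close.
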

We start the proof by providing a lower bound on the first term of the right hand side of Ineq.~\eqref{eq:lemma1}.
With the notation
\[
 \tilde\nabla f : =   \nabla f -  \frac{\nabla \sigma \nabla \sigma^\T}{\nabla \sigma^\T A \nabla \sigma} A \nabla f = \nabla f - \frac{\nabla w \nabla w^\T}{\nabla w^\T A \nabla w} A \nabla f
\]
we have
\[
 \left( \frac{\nabla \sigma \nabla \sigma^\T}{\nabla \sigma^\T A \nabla \sigma} A \nabla f \right)^\T A \tilde\nabla f = 0 ,
 \]
and using the first identity of Lemma~\ref{lemma:basic_properties} we obtain
 \begin{align*}
  \nabla f^\T M_w^A \nabla f 
  &= \psi (\sigma) \nabla f^\T M_\sigma^A \nabla f 
  + \sigma \psi' (\sigma) \nabla f^\T A \tilde\nabla f \\
  &= \psi (\sigma) \nabla f^\T M_\sigma^A \nabla f 
  + \sigma \psi' (\sigma) 
  \left( \tilde\nabla f + \frac{\nabla \sigma \nabla \sigma^\T}{\nabla \sigma^\T A \nabla \sigma} A \nabla f \right)^\T
  A \tilde\nabla f  \\[1ex]
  &=\psi (\sigma) \nabla f^\T M_\sigma^A \nabla f 
  + \sigma \psi' (\sigma) 
  \tilde\nabla f^\T
  A \tilde\nabla f  .
 \end{align*}
Now using $M_\sigma^A \nabla \sigma = 0$ established in Lemma~\ref{lemma:basic_properties} and the fact that $M_\sigma^A$ is symmetric, we obtain 
\[
 \nabla f^\T M_w^A \nabla f =  \psi (\sigma) \tilde\nabla f^\T M_\sigma^A \tilde\nabla f 
  + \sigma \psi' (\sigma) \tilde\nabla f^\T A \tilde\nabla f .
\]
From Proposition~\ref{prop:quantitative} we obtain for almost all $x \in B_1$
\begin{equation}\label{eq:first_term}
 \nabla f^\T M_w^A \nabla f \geq  \left(\sigma \psi' (\sigma) - \psi (\sigma) \sigma C_M \right) \tilde\nabla f^\T A \tilde\nabla f .
\end{equation}
Using Ineq.~\eqref{eq:first_term} we provide a lower bound on the first two terms of the right hand side of Ineq.~\eqref{eq:lemma1}. Lemma~\ref{lemma:basic_properties} implies
\begin{equation} \label{eq:second_term}
 F_w^A = \psi(\sigma) (d-2) + \psi(\sigma) B_\sigma^A - \psi' (\sigma) \sigma  , \quad \text{where} \quad B_\sigma^A := F_\sigma^A - F_\sigma^{A_0} .
\end{equation}
We use Ineq.~\eqref{eq:first_term}, Eq.~\eqref{eq:second_term}, $L_0 (f^2) = 2 f L_0 f - 2 \nabla f^\T A \nabla f$,
\begin{equation}
\label{eq:f-tildef}
 \nabla f^\T A \nabla f = \tilde\nabla f^\T A \tilde\nabla f + \frac{\left(\nabla w^\T A \nabla f \right)^2}{\nabla w^\T A \nabla w} 
\end{equation}
and Green's theorem, i.e.\ $\int u L_0 v = \int \nabla u^\T A \nabla v$ for $u,v \in C_{\mathrm{c}}^2 (B_1)$, to obtain
\begin{multline*} 
I_2 := 4\alpha\int (\nabla f)^\T M_w^A \nabla f - \alpha\int F_w^A L_0 (f^2) 
 \\ \geq 4\alpha\int \left(\sigma \psi' (\sigma) - \psi (\sigma) \sigma C_M \right) \tilde\nabla f^\T A \tilde\nabla f
- (d-2) \alpha \int f^2 L_0 \psi (\sigma)  \\ - 2\alpha \int \left( \psi (\sigma) B_\sigma^A - \psi' (\sigma) \sigma \right) \left( fL_0 f - \tilde\nabla f^\T A \tilde\nabla f - \frac{(\nabla w^\T A \nabla f)^2}{\nabla w^\T A \nabla w} \right) .
\end{multline*}
We apply Eq.~\eqref{LwrtLaplace} to obtain
\begin{multline} \label{eq:I2}
I_2 \geq 
2\alpha\int \left(\sigma \psi' (\sigma) - 2\psi (\sigma) \sigma C_M + \psi (\sigma) B_\sigma^A \right) \tilde\nabla f^\T A \tilde\nabla f
\\
- 2 \alpha^3 \int \left(\psi (\sigma) B_\sigma^A - \psi' (\sigma) \sigma \right) \frac{\nabla w^\T A \nabla w}{w^2} f^2 - R_1 , 
\end{multline}
where
\begin{multline*}
 R_1 := (d-2) \alpha \int f^2 L_0 \psi (\sigma) + 4\alpha^2 \int  \left(\psi (\sigma) B_\sigma^A - \psi' (\sigma) \sigma \right) f D_f \frac{\nabla w^\T A \nabla w}{w^2}
 \\
 - 2\alpha \int  \left(\psi (\sigma) B_\sigma^A - \psi' (\sigma) \sigma \right)\left[ \frac{(\nabla w^\T A \nabla f)^2}{\nabla w^\T A \nabla w} - w^{-\alpha} f L_0 u  \right] .
\end{multline*}
From Lemma~\ref{lemma_step1} and Eq.~\eqref{eq:I2} we infer that
\begin{multline*}
 R_1 +\int \frac{w^2}{\nabla w^\T A\nabla w}(w^{-\alpha}L_0 u)^2 \geq 2\alpha\int \left(\sigma \psi' (\sigma) - 2\psi (\sigma) \sigma C_M + \psi (\sigma) B_\sigma^A \right) \tilde\nabla f^\T A \tilde\nabla f \\
- 2 \alpha^3 \int \left(\psi (\sigma) B_\sigma^A - \psi' (\sigma) \sigma \right) \frac{\nabla w^\T A \nabla w}{w^2} f^2 
 + 4\alpha^2\int\frac{\nabla w^\T A\nabla w}{w^2}D_f ^2.
\end{multline*}
We use the identity $\psi'=\mu \psi$ and the estimate $\lvert B_\sigma^A\rvert \leq C_M\sigma$ from Proposition~\ref{prop:quantitative} and obtain with the notation $C_\mu := \mu - 3 C_M$
\begin{multline} \label{eq:intermediate}
R_1 +\int \frac{w^2}{\nabla w^\T A\nabla w}(w^{-\alpha}L_0 u)^2
\geq 2\alpha C_\mu \int \sigma \psi(\sigma) \tilde\nabla f^\T A \tilde\nabla f \\
+ 2 \alpha^3 C_\mu \int \sigma\psi(\sigma) \frac{\nabla w^\T A \nabla w}{w^2} f^2 
+ 4\alpha^2\int\frac{\nabla w^\T A\nabla w}{w^2}D_f^2.
\end{multline}
Note that $\mu - B_\sigma^A / \sigma > \mu - 3C_M = C_\mu > 0$ by our assumption on $\mu$ and Proposition~\ref{prop:quantitative}. The identities \eqref{eq:f-tildef},
\begin{align*}
  \nabla f^\T A \nabla f 
 &= w^{-2\alpha} \nabla u^\T A \nabla u
 + \alpha^2 w^{- 2} f^2 \nabla w^\T A \nabla w 
 - 2 \alpha w^{-\alpha -1} f \nabla w^\T A \nabla u  \\[1ex]
 &= w^{-2\alpha} \nabla u^\T A \nabla u
 - \alpha^2 w^{- 2} f^2 \nabla w^\T A \nabla w 
 - 2 \alpha w^{-1} f \nabla w^\T A \nabla f  
\end{align*}
and
\[
f\nabla w^\T A\nabla f=f w^{-1}D_f \nabla w^\T A\nabla w 
-\frac 12f^2w^{-1}F_w^A\nabla w^\T A\nabla w 
\]
imply
\begin{multline*}
\tilde \nabla f^\T A \tilde \nabla f 
 = w^{-2\alpha} \nabla u^\T A \nabla u
 - \alpha^2 w^{- 2} f^2 \nabla w^\T A \nabla w 
 - 2 \alpha f w^{-2} D_f \nabla w^\T A\nabla w  \\
 + \alpha w^{-2} f^2 F_w^A \nabla w^\T A\nabla w 
 - \frac{(\nabla w^\T A \nabla f)^2}{\nabla w^\T A \nabla w}  .
\end{multline*}
Combining this with Ineq.~\eqref{eq:intermediate}, and using $\sigma \geq w$, $\psi \geq 1$ and the bound $F_w^A \geq -C_F$ from Proposition~\ref{prop:quantitative}, we derive at
 \begin{multline} \label{eq:intermediate2}
R_2
+\int \frac{w^2}{\nabla w^\T A\nabla w}(w^{-\alpha}L_0 u)^2
\geq 2 \alpha C_\mu \int w^{-2\alpha + 1} \nabla u^\T A \nabla u
\\
- 2C_\mu  C_F \alpha^2 \int\sigma \psi (\sigma)  \frac{\nabla w^\T A\nabla w}{w^2} f^2 
+4\alpha^2\int\frac{\nabla w^\T A\nabla w}{w^2}D_f^2 ,
\end{multline}
where 
\begin{multline*}
 R_2 := (d-2) \alpha \int f^2 L_0 \psi (\sigma) + 4\alpha^2 \int  \left(\psi (\sigma) B_\sigma^A - \psi' (\sigma) \sigma + C_\mu \sigma \psi (\sigma) \right) f D_f \frac{\nabla w^\T A \nabla w}{w^2}\\
- 2\alpha \int  \left(\psi (\sigma) B_\sigma^A - \psi' (\sigma) \sigma \right)\left[ \frac{(\nabla w^\T A \nabla f)^2}{\nabla w^\T A \nabla w} - w^{-\alpha} f L_0 u\right]
+ 2 C_\mu \alpha \! \int \! \sigma \psi (\sigma) \frac{(\nabla w^\T A \nabla f)^2}{\nabla w^\T A \nabla w}.
\end{multline*}
We drop the (positive) term $2\alpha C_\mu \int \sigma \psi(\sigma) \tilde\nabla f^\T A \tilde\nabla f$ in  Ineq.~\eqref{eq:intermediate} and add the inequality obtained in this way with Ineq.~\eqref{eq:intermediate2}. This gives us
 \begin{multline} \label{eq:quasiend1}
R
+ \int \frac{w^2}{\nabla w^\T A\nabla w}(w^{-\alpha}L_0 u)^2
\geq \alpha C_\mu \int w^{-2\alpha + 1} \nabla u^\T A \nabla u
\\
+ \alpha^2 (\alpha - C_F ) C_\mu  \int \sigma \psi (\sigma)  \frac{\nabla w^\T A\nabla w}{w^2} f^2
+4\alpha^2\int\frac{\nabla w^\T A\nabla w}{w^2}D_f^2  ,
\end{multline}
where $R := (R_1+R_2)/2$ is given by
\begin{multline*}
 R  
 = (d-2) \alpha \int f^2 L_0 \psi (\sigma) 
 + 4 \alpha^2 \int  \left(\psi (\sigma) B_\sigma^A - \psi' (\sigma) \sigma + \frac{C_\mu}{2} \sigma \psi(\sigma) \right) f D_f \frac{\nabla w^\T A \nabla w}{w^2}\\ 
- 2 \alpha \int  \left(\psi (\sigma) B_\sigma^A - \psi' (\sigma) \sigma - \frac{C_\mu}{2}  \sigma \psi (\sigma) \right) \frac{(\nabla w^\T A \nabla f)^2}{\nabla w^\T A \nabla w}  \\
+ 2 \alpha \int  \left(\psi (\sigma) B_\sigma^A - \psi' (\sigma) \sigma \right)   w^{-\alpha} f L_0 u. 
\end{multline*}
Next we provide an upper bound on $\lvert R \rvert$. Inequality $\lvert B_\sigma^A\rvert \leq C_M \sigma$ from Proposition~\ref{prop:quantitative} and our assumption $\mu > 3 C_M$ imply the estimates
\begin{align*}
 \left\lvert \frac{B_\sigma^A}{\sigma} - \mu + \frac{C_\mu}{2} \right\rvert
 &=
 \left\lvert \frac{B_\sigma^A}{\sigma} - \frac{\mu}{2} - \frac{3 C_M}{2} \right\rvert \leq \frac{4}{3} \mu ,\\
  \left\lvert \frac{B_\sigma^A}{\sigma} - \mu - \frac{C_\mu}{2} \right\rvert
  &= \left\lvert \frac{B_\sigma^A}{\sigma} - \frac{3}{2}\mu + \frac{3}{2} C_M \right\rvert \leq \frac{3}{2} \mu - \frac{1}{2} C_M \leq \frac{3}{2} \mu
\end{align*}
and $\lvert B_\sigma^A / \sigma - \mu \rvert \leq 4 \mu / 3$. Hence, using $\lvert L_0 \psi(\sigma)\rvert\leq C_\psi/\sigma$ from Proposition~\ref{prop:quantitative}, $w\leq \sigma$, $\psi'=\mu\psi$, $\sigma\leq \mu_1 w$, $\psi (\sigma) \leq \euler^{\mu\sqrt{\elliptic_1}}$ and $\nabla w^\T A\nabla w\leq (\varphi' (\sigma))^2 \elliptic_1^2 \leq \elliptic_1^2$ we obtain
\begin{align*}
 \left\lvert R \right\rvert
&\leq C_\psi d\alpha\int \frac{f^2}{w}
+ 3 \alpha \mu \mu_1 \euler^{\mu\sqrt{\elliptic_1}} 
\int \left(
2\alpha \elliptic_1^2    \frac{\lvert fD_f \rvert}{w}
+  w\frac{(\nabla w^\T A\nabla f)^2}{\nabla w^\T A\nabla w}
+  w^{1-\alpha}\lvert fL_0 u\rvert
\right) \\
& \leq K \left[
\alpha\int \frac{f^2}{w}
+\alpha^2\int \frac{\lvert fD_f\rvert}{w}
+\alpha \int w\frac{(\nabla w^\T A\nabla f)^2}{\nabla w^\T A\nabla w}
+\alpha\int w^{1-\alpha}\lvert fL_0 u\rvert 
\right] ,
\end{align*}
where
\[
 K :=\max\left\{d C_\psi, 6\mu \mu_1 \euler^{\mu  \sqrt{\elliptic_1}} \elliptic_1^2  \right\} 
 \leq 6 d \mu \mu_1 \euler^{\mu \sqrt{\elliptic_1}}\elliptic_1^2   \left( \sqrt{\elliptic_1} (C_F' + \mu) + d \right).
\] 
Using $(a-b)^2 \leq 2a^2 + 2b^2$, $\nabla w^\T A\nabla w\leq \elliptic_1^2$ and $w\leq \sigma\leq \sqrt{\elliptic_1}$ on $B_1$, we obtain
\begin{align*}
\frac {w}{\elliptic_1^2}\frac{(\nabla w^\T A\nabla f)^2}{\nabla w^\T A \nabla w}
\leq w\frac{(\nabla w^\T A\nabla f)^2}{(\nabla w^\T A \nabla w)^2}
\leq 2 \frac{D_f^2}{w} + \frac{C_F^2}{2} \frac{f^2}{w} 
\leq 2 \sqrt{\elliptic_1} \frac{D_f^2}{w^2} + \frac{C_F^2}{2} \frac{f^2}{w}.
\end{align*}
Furthermore, for all $t > 0$ we have 
\begin{align*}
\int f^2&\leq \sqrt{\elliptic_1} \int \frac{f^2}{w},\\
 \alpha\int w^{1-\alpha} \lvert fL_0 u\rvert &\leq \frac{\sqrt{\elliptic_1}\alpha^2}{2}\int\frac{f^2}{w}+ \frac{1}{2} \int w^{2-2\alpha}\lvert L_0 u\rvert ^2, \ \text{and}\\
 \int w^{-1}\lvert f D_f \rvert & \leq t \int w^{-1}f^2+\frac{\sqrt{\elliptic_1}}{4t}\int w^{-2}D_f^2 .
\end{align*}
Hence,
\begin{multline}\label{quasiend2}
\frac{\lvert R \rvert}{K} 
\leq 
 \alpha\left(1 +\alpha t + \frac{C_F^2\elliptic_1^2}{2} +\frac{\alpha \sqrt{\elliptic_1}}{2}\right)\int w^{-1}f^2  \\
 +\alpha \left(\alpha \frac{\sqrt{\elliptic_1}}{4t}+2 \elliptic_1^{5/2}\right)\int w^{-2}D_f^2
+ \frac{1}{2}\int w^{2-2\alpha}\lvert L_0 u\rvert ^2.
\end{multline}
From Ineq.~\eqref{eq:quasiend1}, $\sigma \psi (\sigma) \geq w$, the bound $\nabla w^\T A \nabla w \geq \elliptic_1^{-2} \mu_1^{-2} \euler^{-2\mu \sqrt{\elliptic_1}}$ and Ineq.~\eqref{quasiend2}, we finally obtain
\begin{equation} \label{eq:almost}
K_1 \int w^{2-2\alpha}\lvert L_0 u\rvert^2
\geq 
K_2 \int w^{-1}f^2
+K_3 \int w^{-2}D_f^2 
+K_4 \int w^{-2\alpha+1}\nabla u^\T A\nabla u ,
\end{equation}
where
\begin{align*}
 K_1 &:= K/2 +  (\elliptic_1 \mu_1)^2 \euler^{2\mu\sqrt{\elliptic_1}}, \\
 K_2 &:= C_\mu \frac{\euler^{-2\mu\sqrt{\elliptic_1}}}{(\elliptic_1 \mu_1)^2} \alpha^3 - \left[ C_\mu C_F \frac{\euler^{-2\mu\sqrt{\elliptic_1}}}{(\elliptic_1 \mu_1)^2} + K \left(t + \frac{\sqrt{\elliptic_1}}{2} \right) \right] \alpha^2 - K\left[ 1 + \frac{C_F^2 \elliptic_1^2}{2} \right] \alpha, \\
 K_3 &:= \left[ 4\frac{\euler^{-2\mu\sqrt{\elliptic_1}}}{(\elliptic_1\mu_1)^2} - K \frac{\sqrt{\elliptic_1}}{4t} \right]\alpha^2 -  2 K \elliptic_1^{5/2} \alpha \ \text{and} \\
 K_4 &:= C_\mu \alpha.
\end{align*}
Now we choose $t$ large enough, such that the coefficient of $\alpha^2$ in $K_3$ is positive, and thereafter we choose $\alpha$ sufficiently large, such that $K_3$ is non-negative. Our particular choice is
\[
 t = t_1 := \frac{1}{8} K  \elliptic_1^{5/2} \mu_1^2 \euler^{2\mu \sqrt{\elliptic_1}}
 \quad \text{and} \quad
 \alpha\geq K \elliptic_1^{9/2} \mu_1^2\euler^{2\mu\sqrt{\elliptic_1}} =:\alpha_1.
\]
Then
\[
 K_3 = 2 \frac{\euler^{-2\mu \sqrt{\elliptic_1}}}{(\elliptic_1 \mu_1)^2} \alpha^2-2K\elliptic_1^{5/2}\alpha \geq 0
\]
and Ineq.~\eqref{eq:almost} implies
\begin{equation} \label{eq:almostCarleman}
K_1 \int w^{2-2\alpha}\lvert L_0 u\rvert^2
\geq 
K_2 \int w^{-1}f^2
+K_4 \int w^{-2\alpha+1}\nabla u^\T A\nabla u .
\end{equation}
If additionally
\[
\alpha \geq  \frac{q}{p} + \sqrt{\frac{q^2}{p^2} + \frac{2r}{p}} =: \alpha_2, 
\]
where
\[
 p := C_\mu \frac{\euler^{-2\mu\sqrt{\elliptic_1}}}{(\elliptic_1\mu_1)^2},
 \quad
 q := C_\mu C_F \frac{\euler^{-2\mu\sqrt{\elliptic_1}}}{(\elliptic_1\mu_1)^2} + K \left(t_1 + \frac{\sqrt{\elliptic_1}}{2} \right),
 \quad 
 r := K\left( 1 + \frac{C_F^2 \elliptic_1^2}{2} \right) ,
\]
then
\[
 K_2\geq \frac 12 C_\mu \frac{\euler^{-2\mu\sqrt{\elliptic_1}}}{ (\elliptic_1\mu_1)^2}\alpha^3 =: K_5 \alpha^3.
\]
Moreover, it can be shown that $\alpha_1 \leq \alpha_2$. By using $\min \{K_5 , C_\mu\} = K_5$ we obtain from Ineq.~\eqref{eq:almostCarleman} for all $\alpha\geq \hat \alpha_0$
\begin{equation} \label{eq:Carleman_rho=1}
 \alpha^3\int w^{-1-2\alpha} u^2
+ \alpha \int w^{-2\alpha+1}\nabla u^\T A\nabla u 
\leq  \hat C \int w^{2-2\alpha} \bigl( L_0 u \bigr)^2 ,
\end{equation}
where
\begin{equation*}
 \hat \alpha_0 = \hat \alpha_0 (d , \elliptic_1 , \elliptic_2 , \mu) := \max\{\alpha_1 , \alpha_2\} = \alpha_2
 \quad \text{and} \quad
 \hat C =  \hat C (d , \elliptic_1 , \elliptic_2 , \mu) := \frac{K_1}{K_5} . 
\end{equation*}
This proves Theorem~\ref{thm:carleman} in case \eqref{eq:case1}.
\par
Now we lift the restriction that $u \in C_{\mathrm c}^\infty (B_1 \setminus \{0\})$ and assume that $u \in W^{2,2} (\RR^d)$ real-valued with support in $B_1 \setminus \{0\}$.
Let $\phi$ be a non-negative and real-valued function in $C_{\mathrm c}^\infty (\mathbb{R}^d)$ with the properties that $\lVert \phi \rVert_1 = 1$ and $\supp \phi \subset \overline{B_1}$. For $\epsilon > 0$ we define $\phi_\epsilon : \mathbb{R}^d \to \mathbb{R}_0^+$ by $\phi_\epsilon (x) = \epsilon^{-d} \phi (x/\epsilon)$. The function $\phi_\epsilon$ belongs to $C_{\mathrm c}^\infty (\mathbb{R}^d)$ and satisfies $\supp \phi_\epsilon \subset \overline{B_{\epsilon}}$. Now define
\[
 u_\epsilon = \phi_\epsilon \ast u, \quad u_\epsilon (x) = \int_{\mathbb{R}^d} \phi_\epsilon (x-y) u(y) \drm y .
\]
If $\epsilon$ is small enough, say $\epsilon \in (0 , \epsilon_0)$, then $u_\epsilon \in C_{\mathrm{c}}^\infty (B_1 \setminus \{0\})$, see \cite[Theorem~1.6.1]{Ziemer-89}. Hence we can apply Ineq.~\eqref{eq:Carleman_rho=1} to the function $u_\epsilon$.
By definition we have
\begin{align*}
 L_0 u_\epsilon &= -\sum_{i,j=1}^d \partial_i (a^{ij} \partial_j (\phi_\epsilon \ast u)) 
= -\sum_{i,j=1}^d \left(\partial_i a^{ij} \right)\left( \phi_\epsilon \ast \partial_j u \right) -\sum_{i,j=1}^d a^{ij} \left( \phi_\epsilon \ast \partial_i \partial_j u \right) .
\end{align*}
Since $\partial_j u$ and $\partial_i \partial_j u$ are elements of $L^2 (\mathbb{R}^d)$, \cite[Theorem~1.6.1 (iii)]{Ziemer-89} tells us that $\phi_\epsilon \ast \partial_j u \to \partial_j u$ and $\phi_\epsilon \ast \partial_i \partial_j u \to \partial_i \partial_j u$ in $L^2 (\mathbb{R}^d)$ as $\epsilon$ tends to zero. 
Hence $L_0 u_{\epsilon}\to L_0 u$ in $L^2$ as $\epsilon\to 0$.
Since obviously $\nabla u_{\epsilon}\to \nabla u$ and $u_{\epsilon}\to u$ in $L^2$ as $\epsilon\to 0$, 
and $w^{-1}$ is bounded both from below and above on $\supp u_\epsilon$ uniformly for all $\epsilon \in (0,\epsilon_0)$, we obtain Ineq.~\eqref{eq:Carleman_rho=1}
for $u \in W^{2,2} (\RR^d)$ real-valued with support in $B_1 \setminus \{0\}$ in the case $b,c \equiv 0$.
\par
Now we lift the restriction on $\rho$ and assume that $\rho > 0$ arbitrary and $u \in W^{2,2} (\RR^d)$ real-valued with support in $B_\rho \setminus \{0\}$.
We introduce the scaled coefficient functions $\tilde a^{ij} (x) := a^{ij} (\rho x)$ on $B_1$, and define the scaled elliptic operator 
\[
\tilde L_0 := - \sum_{i,j=1}^d \partial_i \bigl(\tilde a^{ij} \partial_j \bigr) .
\]
Obviously, the Lipschitz constant of $\tilde L_0$ is $\rho\elliptic_2$ and the ellipticity constant is $\elliptic_1$. By our assumption on $\mu$, we can apply Ineq.~\eqref{eq:Carleman_rho=1} to the function $\tilde u : B_1 \to \RR$, $\tilde u (x) := u (\rho x)$, and obtain for all $\alpha \geq  \tilde \alpha_0$ 
\begin{equation*} 
 \alpha \int   \tilde w^{1-2\alpha} \nabla \tilde u^\T A \nabla \tilde u  + \alpha^3 \int \tilde w^{-1-2\alpha} \tilde u^2    
 \leq \tilde C \int_{\mathbb{R}^d} \tilde w^{2-2\alpha} \bigl( \tilde L_0 \tilde u \bigr)^2,
\end{equation*}
where 
\begin{equation} \label{eq:tildeC+alpha}
\tilde \alpha_0 := \hat \alpha_0 (d , \elliptic_1 ,\rho \elliptic_2 , \mu), \quad   \tilde C :=  \hat C (d , \elliptic_1 ,\rho \elliptic_2 , \mu)
\end{equation}
and $\tilde w (x) := w (\rho x)$ for $x \in B_1$. Note $w = w_{\rho , \mu} = \varphi (\sigma(x / \rho))$ and hence $\tilde w (x) = w_{1,\mu} (x)$.
By the change of variables $y=x \rho$, we obtain for all $\alpha \geq \tilde\alpha_0$
\begin{equation} \label{eq:carleman2}
 \alpha \rho^2 \int w^{1-2\alpha} \nabla u^\T A \nabla u  + \alpha^3 \int w^{-1-2\alpha} u^2    
 \leq \tilde C \rho^4 \int_{\mathbb{R}^d} w^{2-2\alpha} \bigl( L_0 \tilde u \bigr)^2 ,
\end{equation}
which proves the theorem in the case $u \in W^{2,2} (\RR^d)$ real-valued with support in $B_\rho \setminus \{0\}$ and $b,c \equiv 0$.
\par
Now we lift the restriction that $u$ is real-valued and, i.e.\ we allow complex-valued functions. 
We apply Ineq.~\eqref{eq:carleman2} to the real part $\Re u$ and imaginary part $\Im u$ and add these two inequalities. 
We obtain Ineq.~\eqref{eq:carleman2} for $u \in W^{2,2} (\RR^d)$ complex-valued, since $\nabla$ and $L_0$ commute with $\Re$ and $\Im$, $A$ is positive, and $(\nabla \Re u)^\T A (\nabla \Re u) + (\nabla \Im u)^\T A (\nabla \Im u) = \nabla u^\T A \overline{\nabla u}$.
\par
Finally we lift the restriction on $b$ and $c$. Let $u \in W^{2,2} (\RR^d)$ be complex-valued with support in $B_\rho \setminus \{0\}$ and let $b, c \in L^\infty (B_\rho)$ be arbitrary complex valued functions.
From $|a_1 + a_2 + a_3|^2 \leq 3(|a_1|^2 + |a_2|^2 + |a_3|^2)$ for complex numbers $a_i$ we obtain
\[
 | L_0 u |^2 
 \leq 3 \left( | Lu |^2  +  | b^\T \nabla u |^2 +  | cu |^2 \right)
 \leq 3 \left ( | Lu |^2  + \lVert  b  \rVert_\infty^2 \elliptic_1 \nabla u^\T A \overline{\nabla u} + \lVert c \rVert_\infty^2 |u|^2 \right) .
 \] 
By Ineq.~\eqref{eq:carleman2} and $w (x) \leq \sqrt{\elliptic_1}$ on $B_\rho$, it follows that for all $\alpha \geq \tilde\alpha_0$ we have
\begin{multline*}
  \alpha \rho^2 \int w^{-2\alpha+1}\nabla u^\T A\overline{\nabla u} + \alpha^3\int w^{-1-2\alpha} |u|^2 
  \leq  
  3 \tilde C \rho^4 \int  w^{2-2\alpha} \left\lvert Lu \right\rvert^2   
  \\
+ 3 \tilde C \rho^4 \lVert  b  \rVert_{\infty}^2 \elliptic_1^{3/2} \int w^{1-2\alpha} \nabla u^\T A \overline{\nabla u} 
+ 3 \tilde C \rho^4 \lVert c \rVert_{\infty}^2 \elliptic_1^{3/2} \int  w^{-1-2\alpha}  |u|^2  .
\end{multline*}
Now we can subsume the two last terms on the right hand side into the left hand side by choosing $\alpha$ sufficiently large. In particular, setting
\begin{equation}
  \label{eq:alpha+C}
C := 6 \tilde C  
\quad\text{and}\quad  
\alpha_0 :=\max \left\{ \tilde\alpha_0, C \rho^2  \lVert  b  \rVert_{\infty}^2 \elliptic_1^{3/2},
C^{1/3} \rho^{4/3} \lVert c \rVert_{\infty}^{2/3}\sqrt{\elliptic_1}   \right\},
\end{equation} 
we obtain the statement of the theorem for all $\alpha \geq  \alpha_0$ and all $u \in W^{2,2} (\RR^d)$ with support in $B_\rho \setminus \{0\}$.\qed
%
%
%
%
%
\appendix

\section{Proof of Lemma~\ref{lemma:basic_properties}} \label{app:lemma_2.1}
First we prove the fourth relation. We use $\nabla w^\T A \nabla w = \sigma^{-2} \varphi' (\sigma)^2 x^\T A_0^{-1} A A_0^{-1} x$,
   \begin{equation} \label{eq:Lw}
    -L_0 w = \varphi'' (\sigma) \frac{1}{\sigma^2} x^\T A_0^{-1} A A_0^{-1} x - \varphi' (\sigma) \left[ \frac{x^\T A_0^{-1} A A_0^{-1} x}{\sigma^3} - \frac{\diver \left( A A_0^{-1} x\right)}{\sigma} \right]
   \end{equation}
   and since $\rho = 1$
   \begin{equation} \label{psi'}
    \sigma \psi' (\sigma) = 1 - \frac{w}{\sigma \varphi' (\sigma)} - \frac{w \varphi'' (\sigma)}{\varphi' (\sigma)^2},
   \end{equation}
   and obtain
   \begin{equation} \label{diver1}
    F_w^A =  -1 + \frac{w\varphi'' (\sigma) }{ \varphi' (\sigma)^2}
           - \frac{w}{\sigma \varphi' (\sigma)}
           + \frac{w \sigma \diver \left( A A_0^{-1} x \right)}{ \varphi' (\sigma) x^\T A_0^{-1} A A_0^{-1} x} . 
   \end{equation}
Note that $w = \varphi \circ \sigma$ and Eq.~\eqref{eq:Lw} with $\varphi$ replaced by the identity reads 
\begin{equation} \label{eq:Lsigma}
 -L_0 \sigma = -\sigma^{-3} x^\T A_0^{-1} A A_0^{-1} x + \sigma^{-1} \diver \left( A A_0^{-1} x\right).
\end{equation}
Using $\nabla \sigma^\T A \nabla \sigma = \sigma^{-2} x^\T A_0^{-1} A A_0^{-1} x$ and Eq.~\eqref{eq:Lsigma} we have by the definition of $F_\sigma^A$
\begin{equation}\label{diver2}
\diver (A A_0^{-1} x) = \frac{x^\T A_0^{-1} A A_0^{-1} x}{\sigma^2} (F_\sigma^A + 2) .
\end{equation}
From Eq.~\eqref{psi'}, Eq.~\eqref{diver1}, Eq.~\eqref{diver2} and the definition of $\psi$ we obtain $F_w^A = -\sigma \psi' (\sigma) + \psi (\sigma) F_\sigma^A$, i.e.\ the fourth estimate of Lemma~\ref{lemma:basic_properties}.
\par
The fifth identity of Lemma~\ref{lemma:basic_properties} $F_\sigma^{A_0} = d-2$ is a direct consequence of $\nabla \sigma^\T A \nabla \sigma = \sigma^{-2}x^\T A_0^{-1} A A_0^{-1} x$, $\sigma^2 = x^\T A_0^{-1} x$ and Eq.~\eqref{eq:Lsigma}.
\par
We now turn to the proof of the first relation. By definition we have $M_w^A = \tilde S_1 + \tilde S_2 + \tilde S_3 + \tilde S_3^\T$ where 
\[
 \tilde S_1 :=-\frac{1}{2} F_w^{A} A, \quad 
 \tilde S_2 := \frac{1}{2} \diver \left( h_w^A \circ A \right), 
 \quad \text{and} \quad 
 \tilde S_3 :=  -\frac{1}{2} A D (h_w^A) .
\]
For the first summand we have, using the fourth identity of Lemma~\ref{lemma:basic_properties},
\[
\tilde S_1 = -\frac{1}{2} \left( \psi (\sigma) F_\sigma^A - \sigma \psi' (\sigma) \right) A .
\]
For the second term we calculate using the chain rule and the product rule for the divergence
\begin{equation*}
 \tilde S_2 = \frac{1}{2} \diver \left( \frac{\varphi (\sigma)}{\sigma \varphi' (\sigma)} \frac{\sigma A \nabla \sigma}{\nabla \sigma^\T A \nabla \sigma}  \circ A \right) 
 =\frac{1}{2} \sigma \psi' (\sigma) A + \frac{1}{2} \psi (\sigma) \diver \left( \frac{\sigma A \nabla \sigma}{\nabla \sigma^\T A \nabla \sigma} \circ A \right) .
\end{equation*}
For the third summand we use the chain rule to see that $D (h_w^A) = D (\psi (\sigma) h_\sigma^A)$. Hence, by the product rule
\[
 \tilde S_3 = -\frac{1}{2} \psi (\sigma) A D (h_\sigma^A) -  \frac{1}{2} A \psi' (\sigma) (\nabla \sigma) (h_\sigma^A)^\T .
\]
Putting everything together we obtain that
\begin{align*}
 M_w^A &= \psi (\sigma) M_\sigma^A 
 + \sigma \psi' (\sigma) A 
 - \frac{\psi' (\sigma)}{2} \left( A (\nabla \sigma) (h_\sigma^A)^\T +  h_\sigma^A (\nabla \sigma)^\T A  \right) \\
 &= \psi (\sigma) M_\sigma^A 
 + \sigma \psi'(\sigma) \left[  A 
 - \frac{1}{2} \left( A \frac{\nabla \sigma \nabla \sigma^{\perp}}{\nabla\sigma^{\perp} A \nabla \sigma} A + A \frac{\nabla \sigma \nabla \sigma^{\perp}}{\nabla\sigma^{\perp} A \nabla \sigma} A \right) \right] .
\end{align*}
\par
To prove the second relation $M_\sigma^{A_0} = 0$ we have by $h_\sigma^{A_0} = x$ 
\begin{align*}
 \diver \left( x \circ A_0 \right) = d A_0 
 \quad \text{and} \quad
 D (h_\sigma^A) = I .
 \end{align*}
 Hence, using $F_\sigma^{A_0} = d-2$ we obtain
\begin{align*}
 M_\sigma^{A_0} &= -\frac{1}{2} F_\sigma^{A_0} A_0 + \frac{1}{2} \diver \left( h_\sigma^A \circ A_0 \right) - \frac{1}{2} A_0 D (h_\sigma^A) - \frac{1}{2} D (h_\sigma^A)^\T A_0  = 0.
 \end{align*}
\par
For the proof of the third relation we infer from the proof of Proposition~\ref{prop:quantitative} that $M_\sigma^A  = (M_\sigma^A - M_\sigma^{A_0} A_0^{-1} A) = (S_5 + S_3 + S_3^\T)$.
Moreover, we have that $2 S_5 = S_7+S_8+ 2S_9$ and $2 S_3 = -A (S_{10} + S_{11} + S_{12})$. We rearrange the terms, recall that $g = \sigma^2 / x^\T A_0^{-1} A A_0^{-1} x$ and $\partial_k A = (\partial_k a^{ij})_{i,j=1}^d$, set $v = (x^\T A_0^{-1} (\partial_k A) A_0^{-1} x)_{k=1}^d$, use $D (A A_0^{-1} x) = A_0^{-1} A  + C$, where 
\[
C = \Bigl(\sum_{k=1}^d (\partial_i a^{jk}) e_k^\T A_0^{-1} x \Bigr)_{i,j=1}^d, 
\]
and obtain that $M_\sigma^A = \sum_{i=1}^6 T_i$ with
\begin{align*}
T_1 &:= \frac{g}{2} \left[ (\nabla a^{ij})^\T A A_0^{-1} x \right]_{i,j=1}^d -\frac{g}{2} C^\T  A,  \\[1ex]
T_2 &:= \frac{g A}{2 x^\T A_0^{-1} A A_0^{-1} x} v x^\T A_0^{-1} A  -\frac{g}{2} A  C, \\[1ex]
T_3 &:= - \frac{g A}{x^\T A_0^{-1} A A_0^{-1} x}  
x^\T A_0^{-1} A A_0^{-1} A A_0^{-1} x + \frac{g}{x^\T A_0^{-1} A A_0^{-1} x} 
A A_0^{-1} x x^\T A_0^{-1} A A_0^{-1}  A,  \\[1ex]
T_{4} &:= \frac{g}{x^\T A_0^{-1} A A_0^{-1} x} 
  A  A_0^{-1} A A_0^{-1} x x^\T A_0^{-1} A   -g A A_0^{-1} A , \\[1ex]
T_5 &:= 2A - \frac{2}{x^\T A_0^{-1} A A_0^{-1} x} 
A A_0^{-1} x x^\T A_0^{-1} A , \\[1ex]
T_6 &:= \frac{g}{2 x^\T A_0^{-1} A A_0^{-1} x} 
A A_0^{-1} x v^\T A - \frac{g A}{2 x^\T A_0^{-1} A A_0^{-1} x} v^\T A A_0^{-1} x.
\end{align*}
If we multiply from the right with $\nabla \sigma = \sigma^{-1} A_0^{-1} x$, then one easily sees that $T_i = 0$, $i \in \{3,4,5,6\}$. It remains to show that $T_1 \nabla \sigma = T_2 \nabla \sigma = 0$. For this purpose, we calculate
\begin{align*}
\left( \left[ (\nabla a^{ik})^\T A A_0^{-1} x \right]_{i,k=1}^d A_0^{-1} x \right)_j  
&= \sum_k \sum_i \partial_i a^{jk} e_i^\T A A_0^{-1} x e_k^\T A_0^{-1} x \\
&= \sum_i e_i^\T A A_0^{-1} x C_{ij} 
= (C^\T A A_0^{-1} x)_j, 
\end{align*}
hence $\left[ (\nabla a^{ij})^\T A A_0^{-1} x \right]_{i,j=1}^d A_0^{-1} x = C^\T A A_0^{-1} x$ and $T_1 \nabla \sigma = 0$.

For $T_2\nabla \sigma$ we calculate
\[
 (C A_0^{-1} x)_i = \sum_{j,k=1}^d e_j^\T A_0^{-1} x \partial_i a^{jk} e_k^\T A_0^{-1} x
 = x^\T A_0^{-1} (\partial_i A) A_0^{-1} x = v_i
\]
and obtain
\[ \pushQED{\qed} 
T_2 \nabla \sigma = \frac{g}{2 \sigma} Av  -\frac{g}{2\sigma } A  C A_0^{-1} x  = \frac{g}{2 \sigma} A(v - C A_0^{-1} x) = 0 . \qedhere \popQED
\]
\section{Proof of Lemma~\ref{lemma_step1}} \label{app:Lemma4.1}
We recall that $f = w^{-\alpha} u$ and calculate
\begin{align} 
-w^{-\alpha}L_0 u 
&=\alpha(\alpha-1)w^{-2} f \nabla w^\T A\nabla w+2\alpha w^{-1} \nabla f^\T A \nabla w - \alpha fw^{-1}L_0 w - L_0 f \nonumber \\[1ex]
&=-L_0 f +\alpha^2 f w^{-2} \nabla w^\T A \nabla w +2\alpha w^{-2}(\nabla w^\T A\nabla w) D_f .\label{LwrtLaplace}
\end{align}
Hence,
\begin{equation} \label{eq:I1}
I_1
=\int \left( 4\alpha^2\frac{\nabla w^\T A\nabla w}{w^2}D_f^2 - 4\alpha D_f L_0 f 
 + 4\alpha ^3 \frac{\nabla w^\T A\nabla w}{w^2}D_f f \right) + P ,
\end{equation}
where
\[
 P := \int \left(\alpha^2\frac{\sqrt{\nabla w^\T A\nabla w}}w f -\frac{w}{\sqrt{\nabla w^\T A\nabla w}}L_0 f \right)^2 \geq 0 .
\]
We use Green's formula, i.e.\ $\int u L_0 v= \int \nabla u^\T A \nabla v$ for functions $u,v \in C_0^2(B_1)$, and obtain for the third term in Eq.~\eqref{eq:I1}
\begin{align*}
\int \frac{\nabla w^\T A\nabla w}{w^2} D_f f&=\int \frac fw \nabla w^\T A\nabla f - \int \frac{f^2}{2w}L_0 w -\int \frac{f^2}{2w^2}\nabla w^\T A\nabla w\\
&=\int \frac fw \nabla w^\T A\nabla f -\int \frac{f^2}{2w^2}\nabla w^\T A\nabla w  - \int\nabla \left(\frac{f^2}{2w} \right)^\T A \nabla w .
\end{align*}
By the quotient rule, the last term equals to the sum of the two first terms. Hence,
\[
 \int \frac{\nabla w^\T A\nabla w}{w^2} D_f f = 0 .
\]
For the second term in Eq.~\eqref{eq:I1} we have
\begin{align*}
4\alpha\int D_f L_0 f=4\alpha\int w\frac{\nabla w^\T A\nabla f}{\nabla w^\T A\nabla w}L_0 f+2\alpha\int F_w^A fL_0 f .
\end{align*}
By Green's formula, we have
\begin{align*}
2\alpha\int F_w^AfL_0 f&= 2\alpha\int \nabla (F_w^A f)^\T A \nabla f=2\alpha\int f \nabla f^\T A \nabla F_w^A + 2\alpha \int F_w^A\nabla f^\T A \nabla f\\
&=\alpha\int (\nabla F_w^A)^\T A\nabla (f^2) + 2\alpha \int F_w^A\nabla f^\T A\nabla f \\
&=\alpha\int F_w^A L_0 (f^2) + 2 \alpha \int F_w^A\nabla f^\T A\nabla f .
\end{align*}
Hence, the second term in Eq.~\eqref{eq:I1} reads
\begin{align}
4\alpha\int D_f L_0 f &=4\alpha\int w\frac{\nabla w^\T A\nabla f}{\nabla w^\T A\nabla w} L_0 f + 2\alpha\int F_w^A\nabla f^\T A\nabla f+\alpha\int F_w^A L_0 (f^2). \label{eq:second}
\end{align}
From Rellich's identity \cite[Eq.~(5.2)]{Necas-12} we obtain
\begin{equation}\label{eq:rellich}
  \int (h_w^A)^\T \nabla f L_0 f = - \frac{1}{2} \int \nabla f^\T B \nabla f ,
\end{equation}
where
\[
h_w^A = \frac{w A \nabla w}{\nabla w^\T A \nabla w} \quad \text{and} \quad B := \diver (h_w^A A) - A D(h_w^A) - D(h_w^A)^\T A.
\]
From Eq.~\eqref{eq:second} and \eqref{eq:rellich} we obtain
\[
4\alpha\int D_f L_0 f
=-2\alpha \int \nabla f^\T B \nabla f +2\alpha\int F_w^A\nabla f^\T A\nabla f+\alpha\int F_w^A L_0 (f^2). 
\]
By definition of $M_w^A$ this gives
\begin{equation*} \pushQED{\qed} 
- 4\alpha\int D_f L_0 f=4\alpha\int (\nabla f)^\T M_w^A \nabla f - \alpha\int F_w^AL_0 (f^2). \qedhere \popQED
\end{equation*}
%
%
%
%
%
%
%
\section*{Acknowledgment}
Large parts of this work was done during visits of C.R.\ and M.T.\ in Zagreb and I.N.\ in Chemnitz. This exchange was partially financially supported by the DAAD and the Croatian Ministry of Science, Education and Sports through the PPP-grant ``Scale-uniform controllability of partial differential equations''. 
I.N.\ was also partially supported by HRZZ project grant 9345.

The authors are grateful to Ivan Veseli\'c who introduced us to this research topic, and to Denis Borisov for stimulating discussions and comments received in response to an earlier version of this manuscript.
The authors also thank Sergio Vessella for valuable comments and for pointing to the paper \cite{MorassiRV-11}.

%
%
%
%
%
%
\newcommand{\etalchar}[1]{$^{#1}$}

\end{document}